\newtheorem{theorem}{Theorem}
\newtheorem{corollary}[theorem]{Corollary}
\newtheorem{lemma}[theorem]{Lemma}
\begin{document}
\title{Solving the Linear 1D Thermoelasticity Equations with Pure Delay}
\author{Denys Ya. Khusainov\thanks{Department of Cybernetics, Kyiv National Taras Shevchenko University, 64 Volodymyrska Str, 01601 Kyiv, Ukraine, e-mail: \texttt{d.y.khusainov@gmail.com}} \and
Michael Pokojovy\thanks{Department of Mathematics and Statistics, University of Konstanz, Universitaetstr 10, 78457 Konstanz, Germany, e-mail: \texttt{michael.pokojovy@uni-konstanz.de}}}

\date{\today}

\maketitle

\begin{abstract}
	\noindent
	We propose a system of partial differential equations with a single constant delay $\tau > 0$
	describing the behavior of a one-dimensional thermoelastic solid occupying a bounded interval of $\mathbb{R}^{1}$.
	For an initial-boundary value problem associated with this system, 
	we prove a global well-posedness result in a certain topology under appropriate regularity conditions on the data.
	Further, we show the solution of our delayed model to converge to the solution of the classical equations of thermoelasticity as $\tau \to 0$.
	Finally, we deduce an explicit solution representation for the delay problem. \\[0.5cm]
	\noindent \textbf{Keywords}: thermoelasticity; partial differential equations with delay; well-posedness; small parameter asymptotics; solution representation
\end{abstract}

\pagestyle{myheadings}
\thispagestyle{plain}
\markboth{\textsc{D.~Ya. Khusainov, M. Pokojovy}}{\textsc{Solving the Linear 1D Thermoelasticity Equations with Pure Delay}}

\addcontentsline{toc}{section}{Introduction}
\section*{Introduction}
	Over the past half-century, the equations of thermoelasticity have drawn a lot of attention
	both from the side of mathematical and physical communities.
	Starting with the late 50-s and early 60-s of the last century,
	the necessity of a rational physical description for elastic deformations of solid bodies
	accompanied by thermal stresses motivated the more prominent mathematicians, physists and engineers 
	to focus on this problem (see, e.g., \cite{Ca1972}, \cite{Cha1960}, etc.).
	As a consequence, many theories emerged, 
	mainly in the cross-section of (non-linear) field theory and thermodynamics,
	making it possible for the equations of thermoelasticity
	to be interpreted as an anelastic modification of the equations of elasticity (cf. \cite{Day1985} and the references therein).
	Both linear and nonlinear models and solution theories were proposed.

	An initial-boundary value problem for the general linear equations of classical thermoelasticity in a bounded smooth domain $\Omega \subset \mathbb{R}^{n}$
	\begin{align}
		\rho \partial_{tt} u_{i} = (C_{ijkl} u_{k, l})_{, j} - (m_{ij} T)_{, j} + \rho f_{i}
		 &\text{ in } \Omega \times (0, \infty), 
		\label{DAFERMOS_THERMOELASTICITY_PDE_1} \\
		\rho c_{D} \partial_{t} T + m_{ij} \theta_{0} \partial_{t} u_{i, j} = (K_{ij} T_{, j})_{, i} + \rho c_{D} r 
		&\text{ in } \Omega \times (0, \infty)
		\label{DAFERMOS_THERMOELASTICITY_PDE_2}
	\end{align}
	was studied by Dafermos in \cite{Da1968}.
	Here, $[u_{i}]$ and $T$ denote the (unknown) displacement vector field and the absolute temperature, respectively.
	Further, $\rho > 0$ is the material density, $\theta_{0}$ is a reference temperature rendering the body free of thermal stresses,
	$c_{D}$ is the specific heat capacity,
	$[C_{ijkl}]$ stands for the Hooke's tensor, $[m_{ij}]$ is the stress-temperature tensor,
	$[K_{j}]$ is the heat conductivity tensor, $[f_{i}]$ represents the specific external body force
	and $r$ is the external heat supply.
	Under usual initial conditions, appropriate normalization conditions to rule out the rigid motion as a trivial solution 
	and general boundary conditions
	\begin{align}
		u_{i} &= 0 \text{ in } \Gamma_{1} \times (0, \infty), &
		\big(C_{ijkl} u_{k, l} - m_{ij} T\big) n_{j} + A_{ij} u_{j} &= 0 \text{ in } \Gamma_{1}^{c} \times (0, \infty),
		\label{DAFERMOS_THERMOELASTICITY_BC_1} \\
		T &= 0 \text{ in } \Gamma_{2} \times (0, \infty), &
		\big(K_{ij} T_{, j}\big) n_{i} + B T &= 0 \text{ in } \Gamma_{2}^{c} \times (0, \infty)
		\label{DAFERMOS_THERMOELASTICITY_BC_2}
	\end{align}
	where $\Gamma_{1}, \Gamma_{2} \subset \partial \Omega$ are relatively open,
	$[A_{ij}]$ denotes the ``elasticity'' modulus and $B$ is heat transfer coefficient,
	Dafermos proved the global existence and uniqueness of finite energy solutions and
	studied their regularity as well as asymptotics as $t \to \infty$.
	In 1D, even an exponential stability result for Equations (\ref{DAFERMOS_THERMOELASTICITY_PDE_1})--(\ref{DAFERMOS_THERMOELASTICITY_PDE_2})
	under all ``reasonable'' boundary condition was shown by Hansen in \cite{Ha1992}.

	In his work \cite{Sle1981},
	Slemrod studied the nonlinear equations of 1D thermoelasticity in the Lagrangian coordinates
	\begin{align}
		\partial_{tt} u = \hat{\psi}_{FF}(\partial_{x} u + 1, \theta + T_{0}) \partial_{xx} u + \hat{\psi}_{FT}(\partial u_{x} + 1, \theta + T_{0}) \partial_{x} \theta &\text{ in } (0, 1) \times (0, \infty),
		\label{SLEMROD_THERMOELASTICITY_PDE_1} \\
		\rho(\theta + T_{0}) \big(\hat{\psi}_{TT}(\partial_{x} u + 1, \theta + T_{0}) \partial_{t} \theta +
		\hat{\psi}_{FT}(\partial_{x} u + 1, \theta + T_{0}) \partial_{xt} u\big) = \hat{q}'(\partial_{x} \theta) \partial_{xx} \theta &\text{ in } (0, 1) \times (0, \infty)
		\label{SLEMROD_THERMOELASTICITY_PDE_2}
	\end{align}
	for the unknown functions $u$ denoting the displacement of the rod 
	and $\theta$ being a temperature difference to a reference temperature $T_{0}$
	rendering the body free of thermal stresses.
	The functions $\hat{\psi}$ and $\hat{q}$ denote the Helmholtz free energy and the heat flux, respectively,
	and are assumed to be given. Finally, $\rho > 0$ is the material density in the references configuration.
	Under appropriate boundary conditions
	(when the boundary is free of tractions and is held at a constant temperature or
	when the body is rigidly clamped and thermally insulated)
	as well as usual initial conditions for both unknown functions,
	a local existence theorem for Equations (\ref{SLEMROD_THERMOELASTICITY_PDE_1})--(\ref{SLEMROD_THERMOELASTICITY_PDE_2}) was proved
	by additionally imposing a regularity and compatibility condition.
	For sufficiently small initial data, the local classical solution could be globally continued.
	At the same time, when studying Equations (\ref{SLEMROD_THERMOELASTICITY_PDE_1})--(\ref{SLEMROD_THERMOELASTICITY_PDE_2}) in the whole space, 
	large data are known to lead to a blow-up in final time (cf. \cite{DaHsi1986}).

	Racke and Shibata studied in \cite{RaShi1991} Equations (\ref{SLEMROD_THERMOELASTICITY_PDE_1})--(\ref{SLEMROD_THERMOELASTICITY_PDE_2})
	under homogeneous Dirichlet boundary conditions for both $u$ and $\theta$.
	Under appropriate smoothness assumptions, they proved the global existence and exponential stability
	for the classical solutions to the problem.
	In contrast to Slemdrod \cite{Sle1981}, their method was using spectral analysis
	rather then ad hoc energy estimates obtained by differentiating the equations with respect to $t$ and $x$.
	A detailed overview of further recent developments in the field of classical thermoelasticity
	and corresponding references can be found in the monograph \cite{JiaRa2000} by Jiang and Racke.

	The classical equations of thermoelasticity outlined above, being a hyperbolic-parabolic system,
	provide a rather good macroscopic description in many real-world applications.
	At the same time, they sometimes fail when being used to model thermoelastic stresses in some other situations,
	in particular, in extremely small bodies exposed to heat pulses of large amplitude (see, e.g., \cite{WaXu2001}), etc.
	To address these issues, a new theory, commonly referred to as the theory of hyperbolic thermoelasticity
	or second sound thermoelasticity, has emerged.
	In contrast to the classical thermoelasticity,
	parabolic Equation (\ref{DAFERMOS_THERMOELASTICITY_PDE_2}) is replaced with a hyperbolic first-order system
	\begin{align}
		\rho c_{D} \partial_{t} T + m_{ij} \theta_{0} \partial_{t} u_{i, j} = q_{i, i} + \rho c_{D} r \text{ in } \Omega \times (0, \infty)
		\label{LINEAR_HYPERBOLIC_THERMOELASCITIY_PDE_1} \\
		\tau_{ij} \partial_{t} q_{i} + q_{i} + K_{ij} T_{, j} = 0 \text{ in } \Omega \times (0, \infty)
		\label{LINEAR_HYPERBOLIC_THERMOELASCITIY_PDE_2}
	\end{align}
	with $[q_{i}]$ and $[\tau_{ij}]$ denoting the heat flux and the relaxation tensor, respectively.
	Both linear and nonlinear versions of the equations of hyperbolic thermoelasticity
	(\ref{DAFERMOS_THERMOELASTICITY_PDE_1}), (\ref{LINEAR_HYPERBOLIC_THERMOELASCITIY_PDE_1})--(\ref{LINEAR_HYPERBOLIC_THERMOELASCITIY_PDE_2})
	have been studied in the literature.
	See, e.g., the article \cite{MeSa2005} by Messaoudi and Said-Houari for a proof of global well-posedness of the 1D system in the whole space
	or Irmscher's work \cite{Ir2011} for the global well-posedness of nonlinear problem
	for rotationally symmetric data in a bounded rotationally symmetric domain of $\mathbb{R}^{3}$.
	In a bounded 1D domain, a quantitative stability comparison
	between the classical and the hyperbolic system was presented by Irmscher and Racke in \cite{IrRa2006}.
	For a detailed overview on hyperbolic thermoelasticity,
	we refer the reader to the paper \cite{Cha1998} by Chandrasekharaiah
	and the work \cite{Ra2009} by Racke.

	A unified approach establishing a connection between the classical and hyperbolic thermoelasticity was established by Tzou in \cite{Tzou1995, Tzou1997}.
	Namely, he proposed to view Equation (\ref{LINEAR_HYPERBOLIC_THERMOELASCITIY_PDE_2}) with $\tau_{ij} \equiv \tau$ as a first-order Taylor approximation of the equation
	\begin{equation}
		q_{i}(\mathbf{x}, t + \tau) + K_{ij} T_{, j}(\mathbf{x}, t) = 0 \text{ for } (\mathbf{x}, t) \in \Omega \times (-\tau, \infty) \notag
	\end{equation}
	being equivalent to the delay equation
	\begin{equation}
		q_{i}(\mathbf{x}, t) + K_{ij} T_{, j}(\mathbf{x}, t - \tau) = 0 \text{ for } (\mathbf{x}, t) \in \Omega \times (0, \infty).
		\label{DELAY_EQUATION_TZOU}
	\end{equation}
	More generaly, a higher-order Taylor expansion to the dual-phase lag constitutive equation
	\begin{equation}
		q_{i}(\mathbf{x}, t + \tau_{1}) + K_{ij} T_{, j}(\mathbf{x}, t + \tau_{2}) = 0 \text{ for } (\mathbf{x}, t) \in \Omega \times (-\max\{\tau_{1}, \tau_{2}\}, \infty). \notag
	\end{equation}
	Together with Equations (\ref{DAFERMOS_THERMOELASTICITY_PDE_1})--(\ref{DAFERMOS_THERMOELASTICITY_PDE_2}), (\ref{LINEAR_HYPERBOLIC_THERMOELASCITIY_PDE_1}),
	this lead to the so-called dual phase-lag thermoelasticity studied by Quintanilla and Racke (cf. references on \cite[p. 415]{Ra2009}).

	If no Taylor expansion with respect to $\tau$ is carried out in Equation (\ref{DELAY_EQUATION_TZOU}),
	there can be shown that the corresponding system is ill-posed
	when being considered in the same topology as the original system of classical thermoelasticity (cf. \cite{DreQuiRa2009}),
	i.e., the system is lacking a continuous dependence of solution on the data.
	Moreover, the delay law (\ref{DELAY_EQUATION_TZOU})
	can, in general, contradict the second law of thermodynamics as shown in \cite{FaFra2014}.

	Nonetheless, it remains desirable to understand the dynamics of equations of thermoelasticity 
	orgininated from delayed material laws.
	One of the first attempt to obtain a well-posedness result for a partial differential equation with pure delay is due to Rodrigues et al.
	In their paper \cite{Ro2007}, Rodrigues et al. studied a heat equation with pure delay
	in an appropriate Frech\'{e}t space
	and showed the delayed Laplacian to generate a $C_{0}$-semigroup on this space.
	Further, they investigated the spectrum of the infinitesimal generator.
	Though their approach can essentially be carried over to the equations of thermoelasticity with pure delayed derived in Section \ref{SECTION_MODEL_DESCRIPTION} below,
	we propose a new approach in this paper preserving the Hilbert space structure of the space
	and thus the connection to the classical equations of thermoelasticity.
	To the authors' best knowledge,
	no results on thermoelasticity with delay in the highest order terms have been previously published in the literature.
	At the same time, we refer the reader 
	to the works by Khusainov et al. \cite{KhuIvKo2009, KhuPoAz2013.1, KhuPoAz2013.2, KhuPoAz2014},
	in which the authors studied the well-posedness and controllability
	for the heat and/or the wave equation on a finite time horizon.
	In their recent paper \cite{KhuPoRa2013}, Khusainov et al.
	exploited the $L^{2}$-maximum regularity theory
	to prove a global well-posedness and asymptotic stability results for a regularized heat equation.

	The present article has the following outline.
	In Section \ref{SECTION_MODEL_DESCRIPTION},
	we give a physical model for linear thermoelasticity based on delayed material laws.
	For the sake of simplicity, we present a 1D model
	though our approach can easily be carried over to the general multidimensional case.
	Next, in Section \ref{SECTION_WELL_POSEDNESS},
	we prove the well-posedness of this model in an appropriate Hilber space framework
	and discuss the small parameter asymptotics, i.e., the behavior of solutions as $\tau \to 0$.
	Further, in Section \ref{SECTION_EXPLICIT_SOLUTION_REPRESENTATION},
	we deduce an explicit solution representation formula.
	Finally, in the Appendix, we summarize some seminal results on the delayed exponential function
	and Cauchy problems with pure delay.

\section[Model Description]{Model Description}
	\label{SECTION_MODEL_DESCRIPTION}
	We consider a solid body occupying an axis aligned rectangular domain of $\mathbb{R}^{3}$.
	Assuming that the body motion is purely longitudinal with respect to the first space variable $x$ (cf. \cite[p. 100]{Sle1981}),
	deformation gradient, stress and strain tensors, etc., are diagonal matrices
	and a complete rational description of the original 3D body motion can be reduced to studying
	the 1D projection $\Omega = (0, l)$, $l > 0$, of the body onto the $x$-axis
	as displayed on Figure \ref{FIGURE_SOLID_BODY} below.
	Hence, in the following, we restrict ourselves to considering the relevant physical values only in $x$-direction.

	Let the functions $u \colon \bar{\Omega} \times [0, \infty) \to \mathbb{R}$ and $\theta \colon \bar{\Omega} \times [0, \infty) \to \mathbb{R}$
	denote the body displacement and its relative temperature measured 
	with respect to a reference temperature $\theta_{0} > 0$ rendering the body free of thermal stresses, respectively.
	We restrict ourselves to the Lagrangian coordinates and
	write $\sigma, \varepsilon, S, q \colon \bar{\Omega} \times [0, \infty) \to \mathbb{R}$ for the stress field, strain field, entropy field 
	or the heat flux, respectively.
	\begin{figure}[h!]
		\begin{centering}
			\setlength{\unitlength}{1mm}
			\begin{picture}(160, 65)(0,0)
				\put(78, 58){$x$}
				\put(125, 23){$y$}
				\put(99, 39){$z$}
				\put(81, 31){$l$}
				\put(20, 0){\includegraphics[scale = 0.4]{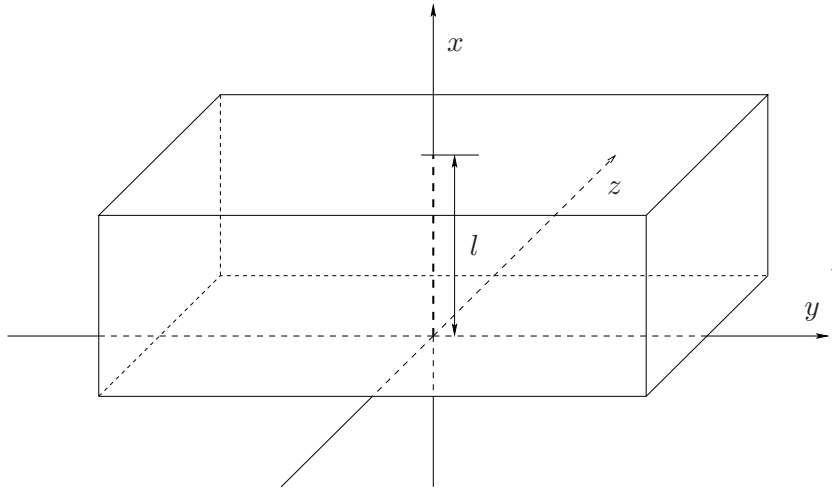}}
			\end{picture}
		\end{centering}
		\caption{3D rectangular solid body \label{FIGURE_SOLID_BODY}}
	\end{figure}
	With $\rho > 0$ denoting the material density, the momentum conservation law as well as the linearized entropy balance law read as
	\begin{align}
		\rho \partial_{tt} u(x, t) + \partial_{x} \sigma(x, t) &= \rho r(x, t) \text{ for } x \in \Omega, t > 0, \label{EQUATION_CONSERVATION_LAW_1} \\
		\theta_{0} \partial_{t} S(x, t) + \partial_{x} q(x, t) &= h(x, t) \text{ for } x \in \Omega, t > 0 \label{EQUATION_CONSERVATION_LAW_2}
	\end{align}
	where $r \colon \bar{\Omega} \times [0, \infty) \to \mathbb{R}$ and $h \colon \bar{\Omega} \times [0, \infty) \to \mathbb{R}$ are a known external force
	acting on the body and a heat source.
	
	Assuming physical linearity for the strain field, 
	the strain can be decomposed into elastic strain $\varepsilon^{e}$ and thermal stress $\varepsilon^{t}$.
	Furthing, assuming $\left|\frac{\theta(t, x)}{\theta_{0}}\right| \ll 1$ uniformly with respect to $x \in \bar{\Omega}$, $t \geq 0$,
	we can postulate
	\begin{equation}
		\varepsilon^{t}(x, t) = \alpha \theta(x, t) \text{ for } x \in \Omega, t > 0 \notag
	\end{equation}
	where $\alpha > 0$ denotes the thermal expansion coefficient.
	Exploiting the second law of thermodynamics for irreversible processes, we obtain (cf. \cite[p. 3]{Day1985})
	\begin{equation}
		S(x, t) = \alpha B \varepsilon^{e}(x, t) + \tfrac{\rho c_{\rho}}{\theta_{0}} \theta(x, t) \text{ for } x \in \Omega, t > 0
		\label{EQUATION_ENTROPY}
	\end{equation}
	with $c_{\rho} > 0$ standing for the specific heat capacity and $B \in \mathbb{R}$ denoting the bulk modulus.

	In our further considerations, we depart from the classical material laws and use their delay counterparts.
	Let $\tau > 0$ be a positive time delay.
	In the sequel, all functions are supposed to be defined on $\bar{\Omega} \times [-\tau, \infty)$.
	Assuming a delay feedback between the stress and the strain as well as the heat flux and the temperature gradient,
	the Hooke's law with pure delay reads as (cp. \cite{Ca1972})
	\begin{equation}
		\sigma(x, t) = \big(B + \tfrac{4}{3} G\big) \varepsilon^{e}(x, t - \tau) + B \varepsilon^{t}(x, t - \tau) \text{ for } x \in \Omega, t > 0
		\label{EQUATION_HOOKES_LAW}
	\end{equation}
	with $G > 0$ denoting the shear modulus.
	Similarly, we consider a delay version of Fourier's law given as
	\begin{equation}
		q(x, t) = -\kappa \partial_{x} \theta (x, t - \tau) \text{ for } x \in \Omega, t > 0
		\label{EQUATION_FOURIER_LAW}
	\end{equation}
	where $\kappa > 0$ stands for the thermal conductivity.
	Assuming the elastic strain tensor to be equal to the displacement gradient, we have
	\begin{equation}
		\varepsilon^{e}(x, t) = \partial_{x} u(x, t) \text{ for } x \in \Omega, t > 0.
		\label{EQUATION_STRAIN}
	\end{equation}
	Since within the infinitesimal elasticity theory
	the stress tensor $\sigma(x, t)$ and the deformation $\partial_{x} u(x, t)$ must be proportional,
	Equations (\ref{EQUATION_HOOKES_LAW}) and (\ref{EQUATION_STRAIN}) imply together
	\begin{equation}
		\partial_{x} u(x, t) = \partial_{x} u(x, t - \tau) \text{ for } x \in \Omega, t > 0.
		\label{EQUATION_DELAY_SCHWARZ_THEOREM}
	\end{equation}
	Finally, we also modify (\ref{EQUATION_ENTROPY}) to introduce a delay feedback between the entropy,
	the elastic strain tensor and the temperature
	\begin{equation}
		S(x, t) = \alpha B \varepsilon^{e}(x, t - \tau) + \tfrac{\rho c_{\rho}}{\theta_{0}} \theta(x, t - \tau) \text{ for } x \in \Omega, t > 0.
		\label{EQUATION_ENTROPY_DELAY}
	\end{equation}
	Exploiting now Equations (\ref{EQUATION_CONSERVATION_LAW_1}), (\ref{EQUATION_CONSERVATION_LAW_2}), (\ref{EQUATION_HOOKES_LAW})--(\ref{EQUATION_ENTROPY_DELAY}), we obtain
	\begin{align}
		\rho \partial_{tt} u(x, t) - \big(B + \tfrac{4}{3}G\big) \partial_{xx} u(x, t - \tau) + \alpha B \partial_{x} \theta(x, t - \tau)  &= f(x, t) \text{ for } x \in \Omega, t > 0, \label{EQUATION_THERMOELASTICITY_DELAY_1} \\
		\rho c_{\rho} \partial_{t} \theta(x, t) - \kappa \partial_{xx} \theta(x, t - \tau) + \alpha \theta_{0} B \partial_{tx} u(x, t - \tau) &= h(x, t) \text{ for } x \in \Omega, t > 0, \label{EQUATION_THERMOELASTICITY_DELAY_2} \\
		\partial_{t} \partial_{x} u(x, t) - \partial_{x} \partial_{t} u(x, t - \tau) &= 0 \text{ for } x \in \Omega, t > 0. \label{EQUATION_THERMOELASTICITY_DELAY_3}
	\end{align}
	
	To close Equations (\ref{EQUATION_THERMOELASTICITY_DELAY_1})--(\ref{EQUATION_THERMOELASTICITY_DELAY_3}),
	appropriate boundary and initial conditions for $u$ and $\theta$ are required.
	In the following, we prescribe homogeneous Dirichlet boundary conditions for $u$ and homogeneous Neumann boundary conditions for $\theta$ given as
	\begin{equation}
		u(0, t) = u(l, t) = 0, \quad \partial_{x} \theta(0, t) = \partial_{x} \theta(l, t) = 0 \text{ for } t > 0.
		\label{EQUATION_BOUNDARY_CONDITIONS}
	\end{equation}
	This particular choice of boundary conditions not only turns out to be convenient for our further mathematical considerations
	but is also a physically relevant one.
	Similar to the thermoelasticity with second sound,
	it is one of the combinations typically arising when studying micro- and nanoscopic strings or plates (cp. \cite{IrRa2006}).

	The initial conditions are given over the whole history period $(\tau, 0)$ and read as
	\begin{equation}
		\begin{array}{ccl}
			\phantom{\partial_{t}} u(x, 0) = u^{0}(x), & \phantom{\partial_{t}} u(x, t) = u^{0}(x, t) & \text{ for } x \in \Omega, t \in (-\tau, 0), \\
			\partial_{t} u(x, 0) = u^{1}(x), & \partial_{t} u(x, t) = u^{1}(x, t) & \text{ for } x \in \Omega, t \in (-\tau, 0), \\
			\phantom{\partial_{t}} \theta(x, 0) = \theta^{0}(x), & \phantom{\partial_{t}} \theta(x, t) = \theta^{0}(x, t) & \text{ for } x \in \Omega, t \in (-\tau, 0)
		\end{array}
		\label{EQUATION_INITIAL_CONDITIONS}
	\end{equation}
	with known
	$u^{0}, u^{1}, \theta^{0} \colon \Omega \to \mathbb{R}$ and
	$u^{0}_{\tau}, u^{1}_{\tau}, \theta^{0}_{\tau} \colon \Omega \times (-\tau, 0) \to \mathbb{R}$.

\section[Well-Posedness and Limit tau to 0]{Well-Posedness and Limit $\tau \to 0$}
	\label{SECTION_WELL_POSEDNESS}
	Letting $a := \tfrac{B + \tfrac{4}{3} G}{\rho}$, $b := \tfrac{\alpha B}{\rho}$,
	$c := \tfrac{\kappa}{\rho c_{\rho}}$, $d := \tfrac{\alpha \theta_{0} B}{\rho c_{\rho}}$ and
	$f(x, t) := r(x, t)$, $g(x, t) := \tfrac{1}{\rho c_{\rho}} h(x, t)$ for $x \in \bar{\Omega}$, $t \geq 0$,
	Equations (\ref{EQUATION_THERMOELASTICITY_DELAY_1})--(\ref{EQUATION_THERMOELASTICITY_DELAY_3}) can be re-written as
	\begin{align}
		\partial_{tt} u(x, t) - a \partial_{xx} u(x, t - \tau) + b \partial_{x} \theta(x, t - \tau)  &= f(x, t) \text{ for } x \in \Omega, t > 0, \label{EQUATION_THERMOELASTICITY_DELAY_REWRITTEN_1} \\
		\partial_{t} \theta(x, t) - c \partial_{xx} \theta(x, t - \tau) + d \partial_{tx} u(x, t - \tau) &= g(x, t) \text{ for } x \in \Omega, t > 0, \label{EQUATION_THERMOELASTICITY_DELAY_REWRITTEN_2} \\
		\partial_{x} u(x, t) - \partial_{x} u(x, t - \tau) &= 0 \text{ for } x \in \Omega, t > 0 \label{EQUATION_THERMOELASTICITY_DELAY_REWRITTEN_3}
	\end{align}
	subject to the boundary conditions from Equation (\ref{EQUATION_BOUNDARY_CONDITIONS})
	and initial conditions from Equation (\ref{EQUATION_INITIAL_CONDITIONS}).
	Introducing a new vector of unknown functions
	\begin{equation}
		\mathbf{V}(x, t) =
		\begin{pmatrix}
			V^{1}(x, t) \\
			V^{2}(x, t) \\
			V^{3}(x, t)
		\end{pmatrix}
		:=
		\begin{pmatrix}
			\partial_{t} u(x, t) \\
			\partial_{x} u(x, t) \\
			\theta(x, t)
		\end{pmatrix}
		\text{ for } x \in \bar{\Omega}, t \in [-\tau, T], 
		\notag
	\end{equation}
	Equations (\ref{EQUATION_THERMOELASTICITY_DELAY_REWRITTEN_1})--(\ref{EQUATION_THERMOELASTICITY_DELAY_REWRITTEN_3}) can be transformed to
	\begin{equation}
		\partial_{t} \mathbf{V}(x, t) + \mathbf{B} \mathbf{V}(x, t - \tau) = \mathbf{F}(x, t) \text{ for } x \in \Omega, t \in (0, T)
		\label{EQUATION_FIRST_ORDER_SYSTEM_PDE}
	\end{equation}
	with the differential matrix operator and the right-hand side
	\begin{equation}
		\mathbf{B} :=
		\begin{pmatrix}
			0 & -a \partial_{x} & b \partial_{x} \\
			-\partial_{x} & 0 & 0 \\
			d \partial_{x} & 0 & -c \partial_{xx}
		\end{pmatrix} \text{ and }
		\mathbf{F}(x, t) :=
		\begin{pmatrix}
			f(x, t) \\
			0 \\
			g(x, t)
		\end{pmatrix}
		\text{ for } x \in \bar{\Omega}, t > 0, \text{ respectively.}
        	\notag
	\end{equation}
	Exploiting Equation (\ref{EQUATION_BOUNDARY_CONDITIONS}) and the definition of $V$, the boundary conditions for $V$ read as
	\begin{equation}
		V^{1}(0, t) = V^{1}(l, t) = 0, \quad
		\partial_{x} V^{3}(0, t) = \partial_{x} V^{3}(l, t) = 0 \text{ for } t > 0
		\label{EQUATION_FIRST_ORDER_SYSTEM_BOUNDARY_CONDITIONS}
	\end{equation}
	whereas the initial conditions are given by
	\begin{equation}
		\mathbf{V}(x, 0) = \mathbf{V}^{0}(x), \quad
		\mathbf{V}(x, t) = \mathbf{V}^{0}_{\tau}(x, t) \text{ for } x \in \Omega, t \in (-\tau, 0)
		\label{EQUATION_FIRST_ORDER_SYSTEM_INITIAL_CONDITIONS}
	\end{equation}
	with
	\begin{equation}
		\mathbf{V}^{0}(x) =
		\begin{pmatrix}
			u^{0} \\ u^{1} \\ \theta^{0}
		\end{pmatrix}, \quad
		\mathbf{V}^{0}_{\tau}(x, t) =
		\begin{pmatrix}
			u^{1}_{\tau}(x, t) \\
			\partial_{x} u^{0}_{\tau}(x, t) \\
			\theta^{0}_{\tau}(x, t)
		\end{pmatrix}
		\text{ for } x \in \bar{\Omega}, t \in [-\tau, 0].
		\notag
	\end{equation}
	Note that Equations
	(\ref{EQUATION_THERMOELASTICITY_DELAY_1})--(\ref{EQUATION_INITIAL_CONDITIONS})
	and (\ref{EQUATION_FIRST_ORDER_SYSTEM_PDE})--(\ref{EQUATION_FIRST_ORDER_SYSTEM_INITIAL_CONDITIONS}) are equivalent
	for, if the vector $V$ is known, $u$ and $\theta$ are uniquely determined by
	\begin{equation}
		u(x, t) =
		\begin{cases}
			u^{0}(x) + \int_{0}^{t} V^{1}(x, s) \mathrm{d}s, & \text{ for } t \geq 0, \\
			u^{0}_{\tau}(x, t), & \text{ for } t \in [-\tau, 0),
		\end{cases} \quad
		\theta(t, x) =
		\begin{cases}
			V^{3}(x, t), & \text{ for } t \geq 0, \\
			\theta^{0}_{\tau}(x, t), & \text{ for } t \in [-\tau, 0).
		\end{cases}
		\notag
	\end{equation}
	Therefore, in the sequel, we consider the following equivalent first-order-in-time problem
	\begin{align}
		&\partial_{t} \mathbf{V}(x, t) + \mathbf{B} \mathbf{V}(x, t - \tau) = \mathbf{F}(x, t) \text{ for } x \in \Omega, t > 0, \label{EQUATION_FIRST_ORDER_SYSTEM_1} \\
		&V^{1}(0, t) = V^{1}(l, t) = 0, \quad \partial_{x} V^{3}(0, t) = \partial_{x} V^{3}(l, t) = 0 \text{ for } t > 0, \label{EQUATION_FIRST_ORDER_SYSTEM_2} \\
		&\mathbf{V}(x, 0) = \mathbf{V}^{0}(x), \quad \mathbf{V}(x, t) = \mathbf{V}^{0}_{\tau}(x, t) \text{ for } x \in \Omega, t \in (-\tau, 0). \label{EQUATION_FIRST_ORDER_SYSTEM_3}
	\end{align}
	
	For our well-posedness investigations,
	we need a solution notion for Equations (\ref{EQUATION_FIRST_ORDER_SYSTEM_1})--(\ref{EQUATION_FIRST_ORDER_SYSTEM_3}).
	To this end, appropriate functional spaces have to be introduced.
	We start with the ``naive'' approach by using the case $\tau = 0$ as a reference situation.
	We introduce the Hilbert space $X := L^{2}(\Omega) \times L^{2}(\Omega) \times L^{2}(\Omega)$ equipped with the dot product
	\begin{equation}
		\langle \mathbf{V}, \mathbf{W}\rangle_{X} :=
		\langle V^{1}, W^{1}\rangle_{L^{2}(\Omega)} + a \langle V^{2}, W^{2}\rangle_{L^{2}(\Omega)} + \tfrac{b}{d} \langle V^{3}, W^{3}\rangle_{L^{2}(\Omega)}
		\text{ for } \mathbf{V}, \mathbf{W} \in X \notag
	\end{equation}
	and define the operator
	\begin{equation}
		\mathcal{B} \colon D(\mathcal{B}) \subset X \to X, \quad V \mapsto BV \notag
	\end{equation}
	with the domain
	\begin{equation}
		D(\mathcal{B}) := \big\{\mathbf{V} \in H^{1}_{0}(\Omega) \times H^{1}(\Omega) \times H^{2}(\Omega) \,|\,
		\partial_{x} V^{3}|_{\partial \Omega} = 0\big\}. \notag
	\end{equation}
	See \cite[Section 3]{Ad2003} for the definition of Sobolev spaces.
	With this notation, Equations (\ref{EQUATION_FIRST_ORDER_SYSTEM_1})--(\ref{EQUATION_FIRST_ORDER_SYSTEM_3}) can be written in the equivalent form
	\begin{align}
		&\partial_{t} \mathbf{V}(x, t) + \mathcal{B} \mathbf{V}(x, t) = \mathbf{F}(x, t) \text{ for } x \in \Omega, t > 0, \label{EQUATION_FIRST_ORDER_SYSTEM_ABSTRACT_FORM_B_1} \\
		&\mathbf{V}(x, 0) = \mathbf{V}^{0}(x), \quad \mathbf{V}(x, t) = \mathbf{V}^{0}_{\tau}(x, t) \text{ for } x \in \Omega, t \in (-\tau, 0). \label{EQUATION_FIRST_ORDER_SYSTEM_ABSTRACT_FORM_B_2}
	\end{align}
	Under a classical solution to Equations (\ref{EQUATION_FIRST_ORDER_SYSTEM_ABSTRACT_FORM_B_1})--(\ref{EQUATION_FIRST_ORDER_SYSTEM_ABSTRACT_FORM_B_2}),
	one would naturally understand a function
	$\mathbf{V} \in C^{0}\big([-\tau, \infty), X\big) \cap C^{1}\big([0, \infty), D(\mathcal{B})\big)$ satisfying the equations pointwise.

	We know from \cite{Ha1992}
	that the linear operator $\mathcal{B}$ is skew-selfadjoint and accretive.
	Its spectrum $\sigma(\mathcal{B})$ only consists of isolated eigenvalues
	$\lambda_{n} \in \mathbb{C}$, $n \in \mathbb{N}_{0}$, of finite multiplicity
	with $\mathrm{Re} \, \lambda_{n} \geq 0$, $n \in \mathbb{N}$, and $\lambda_{n} \to \infty$ as $n \to \infty$.
	The corresponding eigenfunctions $(\boldsymbol{\Psi}_{n})_{n} \subset D(\mathcal{B})$
	build an orthonormal basis of $X$.
	Unfortunately, from \cite[Theorem 1.1]{DreQuiRa2009} we know that Equations (\ref{EQUATION_FIRST_ORDER_SYSTEM_ABSTRACT_FORM_B_1})--(\ref{EQUATION_FIRST_ORDER_SYSTEM_ABSTRACT_FORM_B_2}) are ill-posed in $X$.
	Hence, a different solution notion should be adopted.
	As we already mentioned in Introduction,
	we want to preserve the Hilbert space structure of the problem
	and thus cannot follow the approach developed by Rodrigues et al. in \cite{Ro2007}.
	
	We define the space $X_{\infty} := \Big\{V \in \bigcap\limits_{n = 0}^{\infty} D(\mathcal{B}^{n}) \,\big|\, \|V\|_{X_{\infty}} < \infty\Big\}$
	equipped with the scalar product
	\begin{equation}
		\langle \mathbf{V}, \mathbf{W}\rangle_{X_{\infty}} :=
		\sum_{k = 0}^{\infty} \tfrac{1}{k!} \langle \mathcal{B} \mathbf{V}, \mathcal{B} \mathbf{W}\rangle_{X}
		\text{ for } \mathbf{V}, \mathbf{W} \in X. \notag
	\end{equation}
	Obviously, $X_{\infty}$ is a Hilbert space.
	Moreover, $X_{\infty}$ is dense in $X$ since $(\mathbf{\Phi}_{n})_{n} \subset X_{\infty}$.
	Indeed, for $n \in \mathbb{N}$, we have
	\begin{equation}
		\|\mathbf{\Psi}_{n}\|_{X_{\infty}}^{2} =
		\sum_{k = 0}^{\infty} \tfrac{1}{k!} |\lambda_{n}|^{2k} \|\mathbf{\Psi}_{n}\|_{X}^{2} =
		\sum_{k = 0}^{\infty} \tfrac{1}{k!} |\lambda_{n}|^{2k} = \exp(\lambda_{n}^{2}) < \infty. \notag
	\end{equation}
	
	Restricting $\mathcal{B}$ to its closed subspace $X_{\infty}$,
	we obtain a bounded linear operator
	$\mathcal{B}_{\infty} := \mathcal{B}|_{X_{\infty}}$ on $X_{\infty}$ since
	\begin{equation}
		\|\mathcal{B}_{\infty} \mathbf{V}\|_{X_{\infty}}^{2} =
		\sum_{k = 0}^{\infty} \tfrac{1}{k!} \|\mathcal{B}^{k} \mathcal{B} \mathbf{V}\|_{X}^{2} =
		\sum_{k = 1}^{\infty} \tfrac{1}{k!} \|\mathcal{B}^{k+1} \mathbf{V}\|_{X}^{2} \leq
		\|\mathbf{V}\|_{X_{\infty}}^{2}
		\text{ for any } \mathbf{V} \in X_{\infty}. \notag
	\end{equation}
	Now, restricting Equations (\ref{EQUATION_FIRST_ORDER_SYSTEM_ABSTRACT_FORM_B_1})--(\ref{EQUATION_FIRST_ORDER_SYSTEM_ABSTRACT_FORM_B_2}) to $X_{\infty}$, we obtain
	\begin{align}
		&\partial_{t} \mathbf{V}(x, t) + \mathcal{B}_{\infty} \mathbf{V}(x, t - \tau) = \mathbf{F}(x, t) \text{ for } x \in \Omega, t > 0, \label{EQUATION_FIRST_ORDER_SYSTEM_ABSTRACT_FORM_B_INFTY_1} \\
		&\mathbf{V}(x, 0) = \mathbf{V}^{0}(x), \quad \mathbf{V}(x, t) = \mathbf{V}^{0}_{\tau}(x, t) \text{ for } x \in \Omega, t \in (-\tau, 0). \label{EQUATION_FIRST_ORDER_SYSTEM_ABSTRACT_FORM_B_INFTY_2}
	\end{align}

	Applying Theorem \ref{THEOREM_DELAY_ODE} from Appendix,
	we get the following well-posedness result.
	\begin{theorem}
		\label{THEOREM_EXISTENCE_OF_CLASSICAL_SOLUTIONS}
		Let $\mathbf{V}^{0} \in X_{\infty}$, $\mathbf{V}^{0}_{\tau} \in C^{0}\big([-\tau, 0], X_{\infty}\big)$ with $\mathbf{V}^{0}_{\tau}(\cdot, 0) = \mathbf{V}^{0}$
		and let $\mathbf{F} \in C^{0}\big([0, \infty), X_{\infty}\big)$.
		Then Equations
		(\ref{EQUATION_FIRST_ORDER_SYSTEM_ABSTRACT_FORM_B_INFTY_1})--(\ref{EQUATION_FIRST_ORDER_SYSTEM_ABSTRACT_FORM_B_INFTY_2})
		possess a unique classical solution
		$\mathbf{V} \in C^{0}\big([-\tau, \infty), X_{\infty}\big) \cap C^{1}\big([0, \infty), X_{\infty}\big)$
		explicitly given as
		\begin{equation}
			\mathbf{V}(\cdot, t) = \left\{
			\begin{array}{cl}
				\mathbf{V}^{0}_{\tau}(\cdot, t), & t \in [-\tau, 0), \\
				\mathbf{V}^{0}, & t = 0, \\
				{\exp_{\tau}(-\mathcal{B}_{\infty}, t - \tau) \mathbf{V}^{0} -
				\mathcal{B}_{\infty} \int_{-\tau}^{0} \exp_{\tau}(-\mathcal{B}_{\infty}, t - 2\tau - s) \mathbf{V}^{0}_{\tau}(s) \mathrm{d}s + \atop
				\int_{0}^{t} \exp_{\tau}(-\mathcal{B}_{\infty}, t - \tau - s) \mathbf{F}(\cdot, s) \mathrm{d}s}, & t \in (0, T]
			\end{array}\right.
			\notag
		\end{equation}
	\end{theorem}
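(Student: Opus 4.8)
The plan is to read Equations (\ref{EQUATION_FIRST_ORDER_SYSTEM_ABSTRACT_FORM_B_INFTY_1})--(\ref{EQUATION_FIRST_ORDER_SYSTEM_ABSTRACT_FORM_B_INFTY_2}) as an abstract linear Cauchy problem with a single pure delay, posed in the Hilbert space $X_\infty$, and to solve it by a direct appeal to the delay-ODE theory of Theorem \ref{THEOREM_DELAY_ODE}. The decisive structural fact, established immediately before the statement, is that $\mathcal{B}_\infty = \mathcal{B}|_{X_\infty}$ is a \emph{bounded} operator on $X_\infty$, with $\|\mathcal{B}_\infty\|_{\mathcal{L}(X_\infty)} \le 1$. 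This is precisely the feature that the whole $X_\infty$-construction was engineered to produce, and it is what distinguishes the present setting from the ill-posed problem in $X$: a pure-delay equation driven by a bounded coefficient operator behaves like a finite-dimensional delay ODE and can be integrated explicitly by the method of steps. My strategy is therefore to set $A := -\mathcal{B}_\infty$ and invoke Theorem \ref{THEOREM_DELAY_ODE} verbatim, reading off existence, uniqueness and the representation formula in one stroke.

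Concretely, I would proceed in three steps. First, verify the hypotheses of Theorem \ref{THEOREM_DELAY_ODE}: the coefficient $-\mathcal{B}_\infty \in \mathcal{L}(X_\infty)$ is bounded; the history $\mathbf{V}^0_\tau \in C^0([-\tau,0], X_\infty)$ and the forcing $\mathbf{F} \in C^0([0,\infty), X_\infty)$ possess the required continuity in the $X_\infty$-topology; and the compatibility relation $\mathbf{V}^0_\tau(\cdot,0) = \mathbf{V}^0$ makes the candidate solution glue continuously across $t=0$. Second, observe that because $\mathcal{B}_\infty$ is bounded, the operator-valued delayed exponential $\exp_\tau(-\mathcal{B}_\infty, \cdot)$ is well-defined: its piecewise-polynomial series converges absolutely in the operator norm of $\mathcal{L}(X_\infty)$, exactly as in the matrix case, and inherits the fundamental identity $\tfrac{\mathrm{d}}{\mathrm{d}t}\exp_\tau(A,t) = A\,\exp_\tau(A, t-\tau)$. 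Third, Theorem \ref{THEOREM_DELAY_ODE} then delivers the unique classical solution together with the displayed variation-of-constants formula, whose three summands account respectively for the initial state $\mathbf{V}^0$, the history $\mathbf{V}^0_\tau$ (entering with the prefactor $-\mathcal{B}_\infty$ after the delay term is moved to the right-hand side as an inhomogeneity and the integration variable is shifted by $\tau$), and the external forcing $\mathbf{F}$.

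It then remains to confirm the asserted regularity $\mathbf{V} \in C^0([-\tau,\infty), X_\infty) \cap C^1([0,\infty), X_\infty)$. On $[-\tau,0]$ the solution equals the continuous history $\mathbf{V}^0_\tau$; on $[0,\infty)$ it is a composition of the continuous, operator-norm-differentiable map $t \mapsto \exp_\tau(-\mathcal{B}_\infty, t)$ with the continuous data, hence continuously differentiable, and $\partial_t \mathbf{V}$ is recovered directly from the equation while the compatibility condition secures continuity across $t=0$. I expect the only genuine subtlety to lie in confirming that the delayed-exponential calculus transfers faithfully from the finite-dimensional formulation of Theorem \ref{THEOREM_DELAY_ODE} to a bounded operator coefficient on the infinite-dimensional space $X_\infty$. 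This transfer is legitimate because every ingredient — norm convergence of the series, the recursion across successive intervals $[k\tau, (k+1)\tau]$, and the differentiation identity — relies solely on the Banach-algebra structure of $\mathcal{L}(X_\infty)$ and on $\|\mathcal{B}_\infty\|_{\mathcal{L}(X_\infty)} < \infty$, never on finite-dimensionality; verifying this faithfulness is where I would concentrate the care.
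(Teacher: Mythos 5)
Your proposal is correct and follows exactly the paper's route: the paper proves this theorem simply by noting that $\mathcal{B}_{\infty}$ is a bounded operator on $X_{\infty}$ and then applying Theorem \ref{THEOREM_DELAY_ODE} from the Appendix, which is already stated for an arbitrary Hilbert space and a bounded coefficient operator, so the finite-to-infinite-dimensional ``transfer'' you flag as the main subtlety is already built into the cited result. Your verification of the hypotheses (boundedness, continuity of the data in the $X_{\infty}$-topology, and the compatibility condition yielding classical rather than merely strong solutions) matches what the paper implicitly relies on.
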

	Taking into account the trivial estimate $\|\exp_{\tau}(-\mathcal{B}_{\infty}, t)\|_{X_{\infty}} \leq \exp(\|\mathcal{B}_{\infty}\|_{L(X_{\infty})} t) \leq \max\{1, \exp(t)\}$ for $t \in \mathbb{R}$
	and applying H\"older's inequality, we use the solution representation formula from Theorem \ref{THEOREM_EXISTENCE_OF_CLASSICAL_SOLUTIONS} to obtain the following estimate.
	\begin{corollary}
		The solution $\mathbf{V}$ continuously depends on the data in sense of the estimate
		\begin{equation}
			\|\mathbf{V}\|_{C^{0}([0, T], X_{\infty})} \leq
			\exp(T) \|\mathbf{V}^{0}\|_{X_{\infty}} +
			\tau \exp(T) \|\mathbf{V}^{0}_{\tau}\|_{C^{0}([0, T], X_{\infty})} +
			\sqrt{T} \exp(T) \|\mathbf{F}\|_{L^{2}(0, T; X_{\infty})} \text{ as } T > 0. \notag
		\end{equation}
	\end{corollary}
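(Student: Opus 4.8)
The plan is to estimate the three summands in the closed-form representation of $\mathbf{V}(\cdot, t)$ from Theorem \ref{THEOREM_EXISTENCE_OF_CLASSICAL_SOLUTIONS} separately for $t \in [0, T]$ and then pass to the supremum over $t$. Decomposing $\mathbf{V}(\cdot, t)$ into the homogeneous term $\exp_{\tau}(-\mathcal{B}_{\infty}, t - \tau) \mathbf{V}^{0}$, the history term $-\mathcal{B}_{\infty} \int_{-\tau}^{0} \exp_{\tau}(-\mathcal{B}_{\infty}, t - 2\tau - s) \mathbf{V}^{0}_{\tau}(s) \, \mathrm{d}s$ and the forcing term $\int_{0}^{t} \exp_{\tau}(-\mathcal{B}_{\infty}, t - \tau - s) \mathbf{F}(\cdot, s) \, \mathrm{d}s$, the triangle inequality in $X_{\infty}$ reduces the claim to bounding each of these three contributions by the corresponding summand on the right-hand side.

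First I would record that the arguments of the delayed exponential occurring above, namely $t - \tau$, the values $t - 2\tau - s$ with $s \in [-\tau, 0]$, and the values $t - \tau - s$ with $s \in [0, t]$, are all bounded above by $T$ for every $t \in [0, T]$. Hence the trivial operator bound $\|\exp_{\tau}(-\mathcal{B}_{\infty}, r)\|_{L(X_{\infty})} \leq \max\{1, \exp(r)\} \leq \exp(T)$ applies uniformly. Applied to the homogeneous term this immediately produces the factor $\exp(T) \|\mathbf{V}^{0}\|_{X_{\infty}}$.

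For the history term I would additionally invoke the bound $\|\mathcal{B}_{\infty}\|_{L(X_{\infty})} \leq 1$ established just before the theorem, pull $\mathcal{B}_{\infty}$ out of the integral in operator norm, and majorize the integrand pointwise by $\exp(T) \|\mathbf{V}^{0}_{\tau}(s)\|_{X_{\infty}}$. Since the interval $[-\tau, 0]$ has length $\tau$, integrating the constant majorant yields the factor $\tau \exp(T) \|\mathbf{V}^{0}_{\tau}\|_{C^{0}([-\tau, 0], X_{\infty})}$. For the forcing term I would bound the integrand by $\exp(T) \|\mathbf{F}(\cdot, s)\|_{X_{\infty}}$ and then apply the Cauchy--Schwarz (Hölder) inequality in $s$, writing $\int_{0}^{t} \|\mathbf{F}(\cdot, s)\|_{X_{\infty}} \, \mathrm{d}s \leq \sqrt{t} \, \|\mathbf{F}\|_{L^{2}(0, t; X_{\infty})} \leq \sqrt{T} \, \|\mathbf{F}\|_{L^{2}(0, T; X_{\infty})}$, which supplies the factor $\sqrt{T} \exp(T)$.

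Since all three bounds are independent of $t \in [0, T]$, I would finally take the supremum over $t$ on the left to recover the $C^{0}([0, T], X_{\infty})$-norm and conclude. The only point requiring genuine care is the uniform control of the delayed-exponential arguments together with the use of $\|\mathcal{B}_{\infty}\|_{L(X_{\infty})} \leq 1$ in the history term; everything else is a routine application of the triangle and Hölder inequalities. I therefore expect no serious obstacle, the estimate being an essentially immediate consequence of the explicit representation and the operator bounds already at hand.
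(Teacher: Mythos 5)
Your proposal is correct and follows essentially the same route as the paper, which likewise combines the representation formula of Theorem \ref{THEOREM_EXISTENCE_OF_CLASSICAL_SOLUTIONS} with the trivial bound $\|\exp_{\tau}(-\mathcal{B}_{\infty}, t)\|_{L(X_{\infty})} \leq \max\{1, \exp(t)\}$, the contraction property $\|\mathcal{B}_{\infty}\|_{L(X_{\infty})} \leq 1$, and H\"older's inequality on the forcing term. Your version merely spells out the term-by-term estimates that the paper leaves implicit (and correctly reads the history norm as being over $[-\tau, 0]$ rather than $[0, T]$).
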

	
	For the rest of this section,
	we want to study the behavior of system (\ref{EQUATION_FIRST_ORDER_SYSTEM_ABSTRACT_FORM_B_1})--(\ref{EQUATION_FIRST_ORDER_SYSTEM_ABSTRACT_FORM_B_2})
	for $\tau \to 0$.
	Formally, the limitting system is given as
	\begin{align}
		&\partial_{t} \bar{\mathbf{V}}(x, t) + \mathcal{B}_{\infty} \bar{\mathbf{V}}(x, t) = \mathbf{F}(x, t) \text{ for } x \in \Omega, t > 0, \label{EQUATION_FIRST_ORDER_SYSTEM_ABSTRACT_FORM_B_INFTY_LIMIT_1} \\
		&\bar{\mathbf{V}}(x, 0) = \mathbf{V}^{0}(x) \text{ for } x \in \Omega. \label{EQUATION_FIRST_ORDER_SYSTEM_ABSTRACT_FORM_B_INFTY_LIMIT_2}
	\end{align}
	Being a bounded operator itself,
	$-\mathcal{B}_{\infty}$ generates an analytic $C_{0}$-semigroup of bounded linear operators
	$\exp(-\mathcal{B}_{\infty} t) = \sum\limits_{k = 0}^{\infty} \tfrac{(-\mathcal{B}_{\infty} t)^{k}}{k!}$ on $X_{\infty}$.
	The unique classical solution to Equations (\ref{EQUATION_FIRST_ORDER_SYSTEM_ABSTRACT_FORM_B_1})--(\ref{EQUATION_FIRST_ORDER_SYSTEM_ABSTRACT_FORM_B_2})
	can then be written using the Duhamel's formula
	\begin{equation}
		\bar{\mathbf{V}}(\cdot, t) =
		\exp(-\mathcal{B}_{\infty} t) \mathbf{V}^{0} +
		\int_{0}^{t} \exp(-\mathcal{B}_{\infty} (t - s)) \mathbf{F}(s) \mathrm{d}s \text{ for } t \geq 0. \notag
	\end{equation}

	\begin{lemma}
		For any $T > 0$ and $\tau > 0$, there holds
		\begin{equation}
			\|\exp_{\tau}(-\mathcal{B}_{\infty}, t - \tau) - \exp(-\mathcal{B}_{\infty} t)\|_{L(X_{\infty})} \leq \tau \exp(T) \text{ for } t \in [0, T]. \notag
		\end{equation}
	\end{lemma}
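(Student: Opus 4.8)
The plan is to compare the two operators through their power-series (functional-calculus) representations in $\mathcal{B}_{\infty}$ and to dominate the resulting \emph{operator} series by a \emph{scalar} one, the crucial leverage being that $\mathcal{B}_{\infty}$ is a bounded operator on $X_{\infty}$ with $\|\mathcal{B}_{\infty}\|_{L(X_{\infty})} \le 1$. First I would write out both functional calculi explicitly. Using the series defining the delayed exponential from the Appendix (evaluated at $t-\tau$, which turns its $(k-1)\tau$-shift into a $k\tau$-shift), one has for $t \in [0,T]$
\[
\exp_{\tau}(-\mathcal{B}_{\infty}, t - \tau) = \sum_{k=0}^{\infty} (-\mathcal{B}_{\infty})^{k}\, \frac{\big((t - k\tau)_{+}\big)^{k}}{k!}, \qquad \exp(-\mathcal{B}_{\infty} t) = \sum_{k=0}^{\infty} (-\mathcal{B}_{\infty})^{k}\, \frac{t^{k}}{k!},
\]
where $(s)_{+} := \max\{s,0\}$. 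Subtracting termwise and using the triangle inequality together with $\|(-\mathcal{B}_{\infty})^{k}\|_{L(X_{\infty})} \le \|\mathcal{B}_{\infty}\|_{L(X_{\infty})}^{k} \le 1$, the claim reduces to the purely scalar estimate
\[
\sum_{k=1}^{\infty} \frac{t^{k} - \big((t-k\tau)_{+}\big)^{k}}{k!} \;\le\; \tau\, e^{t} \;\le\; \tau\, e^{T} \qquad (t \in [0,T]),
\]
the $k=0$ summand vanishing and every summand being nonnegative since $0 \le (t-k\tau)_{+} \le t$.

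Next I would establish the per-coefficient bound $\dfrac{t^{k} - ((t-k\tau)_{+})^{k}}{k!} \le \tau\, \dfrac{t^{k-1}}{(k-1)!}$ for each $k \ge 1$ and then sum it against $\sum_{k \ge 1} \tau\, t^{k-1}/(k-1)! = \tau e^{t}$. Here I would distinguish two regimes. When $k\tau \le t$, the summand is $\frac{t^{k}-(t-k\tau)^{k}}{k!}$, which I would control by telescoping (or the mean value theorem applied to $s \mapsto s^{k}$ on $[t-k\tau,\,t]$). When $k\tau > t$, the delayed term drops out entirely and the summand equals $t^{k}/k!$, which is handled using $t/k < \tau$. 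Feeding these two cases into the scalar series and resumming then yields the asserted bound.

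The main obstacle is precisely this coefficient estimate, and the delicate feature is that the argument of the delayed term is displaced from $t$ by $k\tau$, an amount that \emph{grows} with $k$: a careless bound on $t^{k} - (t-k\tau)^{k}$ produces a spurious polynomial-in-$T$ factor, so the point is to absorb the $k$-dependence into the falling factorial $1/(k-1)!$ so that, after summation, only one factor $\tau$ and the exponential $e^{T}$ survive, uniformly over $t \in [0,T]$. In sharpening the constant to exactly $\tau e^{T}$ it is natural to invoke the accretivity of $\mathcal{B}_{\infty}$ (the eigenvalues satisfy $\mathrm{Re}\,\lambda_{n} \ge 0$, i.e. $\|\exp(-\mathcal{B}_{\infty}t)\|_{L(X_{\infty})} \le 1$), which suppresses the exponential growth that a sign-blind triangle inequality would otherwise incur. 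An alternative, entirely equivalent route would observe that $t \mapsto \exp_{\tau}(-\mathcal{B}_{\infty},t-\tau)$ and $t \mapsto \exp(-\mathcal{B}_{\infty}t)$ solve the delay equation $\partial_{t}\Phi = -\mathcal{B}_{\infty}\Phi(\cdot-\tau)$ and the undelayed equation $\partial_{t}\Psi = -\mathcal{B}_{\infty}\Psi$ with identical value at $t=0$; writing the difference of the corresponding integral equations, the inhomogeneity reduces to $\int_{t-\tau}^{t}$-type terms of size $O(\tau)$, and a Grönwall argument closes the estimate with the same $O(\tau)$ rate on $[0,T]$.
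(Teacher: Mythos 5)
Your reduction of the operator estimate to a scalar series is set up correctly: the formula $\exp_{\tau}(-\mathcal{B}_{\infty},t-\tau)=\sum_{k\ge 0}(-\mathcal{B}_{\infty})^{k}\big((t-k\tau)_{+}\big)^{k}/k!$ is right, and with $\|\mathcal{B}_{\infty}\|_{L(X_{\infty})}\le 1$ the triangle inequality does give
\begin{equation}
\|\exp_{\tau}(-\mathcal{B}_{\infty},t-\tau)-\exp(-\mathcal{B}_{\infty}t)\|_{L(X_{\infty})}\;\le\;\sum_{k\ge 1}\frac{t^{k}-\big((t-k\tau)_{+}\big)^{k}}{k!}. \notag
\end{equation}
But the per-coefficient bound on which everything then rests, $\big(t^{k}-((t-k\tau)_{+})^{k}\big)/k!\le \tau\,t^{k-1}/(k-1)!$, is false. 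Take $k=2$, $t=4$, $\tau=1$: the left side is $(16-4)/2=6$, the right side is $4$. The reason is structural: the mean value theorem on $[t-k\tau,t]$ gives $t^{k}-(t-k\tau)^{k}\le k\,t^{k-1}\cdot k\tau$, and after dividing by $k!$ you are left with $k\cdot\tau\,t^{k-1}/(k-1)!$ -- an extra factor of $k$ that survives summation as $\tau(1+t)e^{t}$, not $\tau e^{t}$. This is not just a lossy intermediate step that a cleverer coefficient estimate could repair: the scalar majorant itself violates the target inequality. Indeed $\sum_{k\ge1}\big(t^{k}-((t-k\tau)_{+})^{k}\big)/k!=e^{t}-\exp_{\tau}(1,t-\tau)$, and at $\tau=\tfrac12$, $t=4$ this equals $e^{4}-12.34\approx 42.3$, whereas $\tau e^{4}\approx 27.3$. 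So once you have taken absolute values termwise, the constant $\tau e^{T}$ is unreachable; your appeal to accretivity cannot restore it, because the sign information it would exploit has already been discarded by the triangle inequality.

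What your main route actually proves is the weaker estimate $\tau(1+T)e^{T}$, which would still suffice for the $O(\tau)$ convergence theorem that follows, but it does not prove the lemma as stated. The argument that does deliver the stated constant is essentially the one you relegate to a single sentence at the end: both functions solve $\partial_{t}\Phi=-\mathcal{B}_{\infty}\Phi(\cdot-\tau)$ resp. $\partial_{t}\Psi=-\mathcal{B}_{\infty}\Psi$ with the same value at $t=0$, and one propagates the bound interval by interval. This is exactly the paper's proof: an induction over $((k-1)\tau,k\tau]$ using the fundamental theorem of calculus, the induction hypothesis $\tau e^{k\tau}$, and the elementary bound $\|\exp_{\tau}(-\mathcal{B}_{\infty},s-\tau)-\exp_{\tau}(-\mathcal{B}_{\infty},s-2\tau)\|\le \tau^{k+1}/(k+1)!$ coming directly from the definition of the delayed exponential; the increment over one interval is then of order $\tau^{2}e^{k\tau}$, which is absorbed into $\tau e^{(k+1)\tau}$. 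If you want to keep your write-up, you should either carry out that Gr\"onwall-type induction in detail or weaken the constant in the statement to $\tau(1+T)e^{T}$ and note that this still yields the $O(\tau)$ rate needed downstream.
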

	\begin{proof}
		For $t \in [0, T]$, the claim is an obvious consequence of the mean value theorem.
		Now, taking into account this fact, we use the induction to prove for any natural $k \in \mathbb{N}$
		\begin{equation}
			\|\exp_{\tau}(-\mathcal{B}_{\infty}, t - \tau) - \exp(-\mathcal{B}_{\infty} t)\|_{L(X_{\infty})} 
			\leq \tau \exp(k \tau) \text{ for } t \in ((k - 1) \tau, k \tau].
			\notag
		\end{equation}
		Assuming the claim is true for some $k \in \mathbb{N}$, we want to prove the same assertion for $k + 1$.
		Using the induction assumption and the fundamental theorem of calculus, we get for $t \in (k \tau, (k + 1) \tau]$
		\begin{align*}
			\|\exp_{\tau}&(-\mathcal{B}_{\infty}, t - \tau) - \exp(-\mathcal{B}_{\infty} t)\|_{L(X_{\infty})} \\
			&\leq \tau \exp(k \tau) +
			\int_{k\tau}^{(k + 1) \tau} \|\partial_{s} \exp_{\tau}(-\mathcal{B}_{\infty}, s - \tau) - \partial_{s} \exp(-\mathcal{B}_{\infty} s)\|_{L(X_{\infty})} \\
			&\leq
			\tau \exp(k \tau) + \|\mathcal{B}_{\infty}\|_{L(X_{\infty})}
			\int_{k\tau}^{(k + 1) \tau} \|\exp_{\tau}(-\mathcal{B}_{\infty}, s - 2 \tau) - \exp(-\mathcal{B}_{\infty} s)\|_{L(X_{\infty})} \\
			&\leq
			\tau \exp(k \tau) + \int_{k\tau}^{(k + 1) \tau} \|\exp_{\tau}(-\mathcal{B}_{\infty}, s - \tau) - \exp(-\mathcal{B}_{\infty} s)\|_{L(X_{\infty})} \\
			&+ \int_{k\tau}^{(k + 1) \tau} \|\exp_{\tau}(-\mathcal{B}_{\infty}, s - \tau) - \exp_{\tau}(-\mathcal{B}_{\infty}, s - 2\tau)\|_{L(X_{\infty})} \\
			&\leq \tau \exp(k \tau) + \tau^{2} \exp(k \tau) + \tfrac{\tau^{2}}{k!} \leq
			\exp(k \tau) \big(1 + \tau + \tfrac{\tau^{2}}{2}\big)
			\leq \tau \exp((k + 1) \tau)
		\end{align*}
		since
		$\|\exp_{\tau}(-\mathcal{B}_{\infty}, t) - \exp_{\tau}(-\mathcal{B}_{\infty}, t - \tau)\|_{L(X_{\infty})} \leq
		\tfrac{\tau^{k+1}}{(k + 1)!} \text{ for } t \in (k\tau, (k+1)\tau], k \in \mathbb{N}$,
		by definition of the delayed exponential function.
	\end{proof}

	\begin{theorem}
		Let $T > 0$ and let $\mathbf{V}^{0} \in X_{\infty}$, $\mathbf{F} \in C^{0}\big([0, \infty), X_{\infty}\big)$ be fixed.
		For $\tau > 0$, let $\mathbf{V}^{0}_{\tau} \in C^{0}\big([-\tau, 0], X_{\infty}\big)$ with
		$\mathbf{V}^{0}_{\tau}(0) = \mathbf{V}^{0}$ and
		$\limsup\limits^{}_{\tau \to 0} \|\mathbf{V}^{0}_{\tau}\|_{L^{1}(0, \tau; X_{\infty})} < \infty$.
		Denoting with $V(\cdot; \tau)$ the classical solution of
		(\ref{EQUATION_FIRST_ORDER_SYSTEM_ABSTRACT_FORM_B_INFTY_1})--(\ref{EQUATION_FIRST_ORDER_SYSTEM_ABSTRACT_FORM_B_INFTY_2})
		corresponding to the initial data $\mathbf{V}^{0}$, $\mathbf{V}^{0}_{\tau}$ and the right-hand side $\mathbf{F}$, we have
		\begin{equation}
			\|\mathbf{V}(\cdot; \tau) - \bar{\mathbf{V}}(\cdot; \tau)\|_{C^{0}([0, T], X_{\infty}} = O(\tau) \text{ as } \tau \to 0. \notag
		\end{equation}
	\end{theorem}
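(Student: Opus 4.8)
The plan is to subtract the two Duhamel-type representations and estimate the difference term by term, letting the preceding Lemma carry the analytic weight. For $t \in (0, T]$, I would combine the explicit formula from Theorem \ref{THEOREM_EXISTENCE_OF_CLASSICAL_SOLUTIONS} with the Duhamel representation of $\bar{\mathbf{V}}$ to split
\begin{align*}
	\mathbf{V}(\cdot, t) - \bar{\mathbf{V}}(\cdot, t)
	&= \big(\exp_{\tau}(-\mathcal{B}_{\infty}, t - \tau) - \exp(-\mathcal{B}_{\infty} t)\big) \mathbf{V}^{0} \\
	&\quad - \mathcal{B}_{\infty} \int_{-\tau}^{0} \exp_{\tau}(-\mathcal{B}_{\infty}, t - 2\tau - s) \mathbf{V}^{0}_{\tau}(s)\, \mathrm{d}s \\
	&\quad + \int_{0}^{t} \big(\exp_{\tau}(-\mathcal{B}_{\infty}, t - \tau - s) - \exp(-\mathcal{B}_{\infty} (t - s))\big) \mathbf{F}(\cdot, s)\, \mathrm{d}s
\end{align*}
into an initial-data piece, a history piece and a forcing piece, and then bound the $X_{\infty}$-norm of each uniformly in $t \in [0, T]$.

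The initial-data piece is immediate: the preceding Lemma gives the bound $\tau \exp(T) \|\mathbf{V}^{0}\|_{X_{\infty}}$. For the forcing piece I would substitute $\sigma = t - s \in [0, T]$, so that the integrand becomes $\exp_{\tau}(-\mathcal{B}_{\infty}, \sigma - \tau) - \exp(-\mathcal{B}_{\infty} \sigma)$, to which the Lemma applies verbatim and yields the uniform operator-norm bound $\tau \exp(T)$; factoring this out and using $\int_{0}^{t} \|\mathbf{F}(\cdot, s)\|_{X_{\infty}}\, \mathrm{d}s \leq T \|\mathbf{F}\|_{C^{0}([0, T], X_{\infty})}$ shows this piece is $O(\tau)$ as well.

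The history piece is the one requiring care, since the Lemma does not apply to it. Here I would estimate directly, using the boundedness $\|\mathcal{B}_{\infty}\|_{L(X_{\infty})} \leq 1$ together with the trivial bound $\|\exp_{\tau}(-\mathcal{B}_{\infty}, r)\|_{L(X_{\infty})} \leq \max\{1, \exp(r)\} \leq \exp(T)$, valid because the argument $r = t - 2\tau - s$ does not exceed $T$ for $s \in [-\tau, 0]$ and $t \in [0, T]$. This bounds the piece by $\exp(T) \|\mathbf{V}^{0}_{\tau}\|_{L^{1}(-\tau, 0; X_{\infty})}$; since the integration window has length $\tau$, the control imposed on the histories as $\tau \to 0$ renders this $L^{1}$-norm of order $\tau$ (as it is, for instance, whenever the $\mathbf{V}^{0}_{\tau}$ are uniformly bounded in $X_{\infty}$), whence the piece is again $O(\tau)$.

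Finally I would take the supremum over $t \in [0, T]$ and sum the three $O(\tau)$ contributions to conclude $\|\mathbf{V}(\cdot; \tau) - \bar{\mathbf{V}}(\cdot; \tau)\|_{C^{0}([0, T], X_{\infty})} = O(\tau)$. The main obstacle, as I see it, is precisely the history piece: it is the only one not reducible to the Lemma, and extracting the full rate $\tau$ from it --- rather than mere boundedness --- rests delicately on the interplay between the shrinking window of length $\tau$ and the growth restriction placed on $\mathbf{V}^{0}_{\tau}$ as $\tau \to 0$, which is exactly what the hypothesis on $\|\mathbf{V}^{0}_{\tau}\|_{L^{1}(0, \tau; X_{\infty})}$ is there to supply.
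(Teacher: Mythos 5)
Your proof is correct in outline and follows essentially the same route as the paper: the same three-way splitting of the difference of the two Duhamel-type formulas, the preceding Lemma applied to the initial-data and forcing pieces, and a direct operator-norm estimate of the history piece. Your explicit caveat on the history term is in fact more candid than the paper's own treatment, which silently inserts the bound $\tau(1+\tau)\exp(T)\limsup_{\tau \to 0}\|\mathbf{V}^{0}_{\tau}\|_{L^{1}(-\tau,0;X_{\infty})}$ even though the stated hypothesis (mere boundedness of the $L^{1}$-norms of the histories, not their $O(\tau)$ decay) only yields $O(1)$ for that piece --- exactly the delicacy you identify, which would be cleanly resolved by assuming instead that the $\mathbf{V}^{0}_{\tau}$ are uniformly bounded in $C^{0}([-\tau,0],X_{\infty})$.
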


	\begin{proof}
		Using the representation formulas for $\mathbf{V}$ and $\bar{\mathbf{V}}$, 
		we can estimate for any $t \in [0, T]$
		\begin{align*}
			\|\mathbf{V}(\cdot; \tau) - \bar{\mathbf{V}}(\cdot; \tau)\|_{X_{\infty}} &\leq
			\big\|\exp_{\tau}(-\mathcal{B}_{\infty}, t - \tau) - \exp(-\mathcal{B}_{\infty} t)\big\|_{L(X_{\infty})} \|\mathbf{V}^{0}\|_{X_{\infty}} \\
			&+ \big\|\mathcal{B}_{\infty}\|_{L(X_{\infty})} \int_{-\tau}^{0} \|\exp_{\tau}(-\mathcal{B}_{\infty}, t - 2\tau - s)\|_{L(X_{\infty})} \|\mathbf{V}^{0}_{\tau}(s)\|_{X_{\infty}} \mathrm{d}s \\
			&+ \int_{0}^{t} \|\exp_{\tau}(-\mathcal{B}_{\infty}, t - \tau - s) - \exp(-\mathcal{B}_{\infty} (t - s))\|_{L(X_{\infty})} \|\mathbf{F}(\cdot, s)\|_{X_{\infty}} \mathrm{d}s \\
			&\leq \tau \exp(T) \|\mathbf{V}^{0}\|_{X_{\infty}} + 
			\tau (1 + \tau) \exp(T) \limsup_{\tau \to 0} \|\mathbf{V}^{0}_{\tau}\|_{L^{1}(-\tau, 0; X_{\infty})} \\
			&+ \tau T \exp(T) \|\mathbf{F}\|_{L^{\infty}(0, T; X_{\infty})} = O(\tau) \text{ as } \tau \to 0. \notag
		\end{align*}
		This finishes the proof.
	\end{proof}

\section[Explicit Solution Representation]{Explicit Solution Representation}
	\label{SECTION_EXPLICIT_SOLUTION_REPRESENTATION}
	In this section, we want to deduce an explicit representation of solutions to Equations (\ref{EQUATION_FIRST_ORDER_SYSTEM_ABSTRACT_FORM_B_INFTY_1})--(\ref{EQUATION_FIRST_ORDER_SYSTEM_ABSTRACT_FORM_B_INFTY_2})
	in the form of a Fourier series with respect to an orthogonal basis $(\boldsymbol{\Phi}_{n})_{n \in \mathbb{N}_{0}}$ of $X$ (and thus of $X_{\infty}$) given by
	\begin{equation}
		\boldsymbol{\Phi}_{n}(x) =
		\left\{
		\begin{array}{cl}
			\sqrt{\tfrac{1}{2l}} \big(0, 1, 1\big)^{T}, & \text{ if } n = 0, \\
			\sqrt{\tfrac{2}{3l}} \big(\sin(\nu_{n} x), \cos(\nu_{n} x), \cos(\nu_{n} x)\big)^{T}, &
			\text{ otherwise }
		\end{array}\right.
		\text{ for } x \in \bar{\Omega}, n \in \mathbb{N}_{0} \notag
	\end{equation}
	with
	\begin{equation}
		\nu_{n} := \tfrac{\pi n}{L} \text{ for } n \in \mathbb{N}_{0}. \notag
	\end{equation}
	Note that the sequence $(\boldsymbol{\Phi}_{n})_{n \in \mathbb{N}_{0}}$ does not coincide, in general, 
	with the eigenfunctions $(\boldsymbol{\Psi}_{n})_{n \in \mathbb{N}_{0}}$
	but, at the same time, $(\boldsymbol{\Phi}_{n})_{n \in \mathbb{N}_{0}} \subset D(\mathcal{B}_{\infty})$
	consistutes a basis of $D(\mathcal{B}_{\infty})$.
	To this end, we assume that the conditions of Theorem \ref{THEOREM_EXISTENCE_OF_CLASSICAL_SOLUTIONS} are satisfied
	which yields a unique classical solution
	$\mathbf{V} \in C^{0}\big([-\tau, \infty), X_{\infty}\big) \cap C^{1}\big([0, \infty), X_{\infty}\big)$.

	Denoting $\boldsymbol{\Phi}_{n} = (\Phi_{n}^{1}, \Phi_{n}^{2}, \Phi_{n}^{3})^{T}$ and
	computing the component-wise Fourier coefficients
	\begin{equation}
		\begin{split}
			V^{0, k}_{n} &= \langle V^{0, k}, \Phi_{n}^{k}\rangle_{L^{2}(\Omega)}, \\
			V^{0, k}_{\tau, n}(t) &= \langle V^{0, k}_{\tau}(\cdot, t), \Phi_{n}^{k}\rangle_{L^{2}(\Omega)} \text{ for } t \in [-\tau, 0], \\
			F_{n}^{k}(t) &= \langle F^{k}(\cdot, t), \Phi_{n}^{k}\rangle_{L^{2}(\Omega)} \text{ for } t \geq 0
		\end{split}
		\notag
	\end{equation}
	for $n \in \mathbb{N}_{0}$ and $k = 1, 2, 3$,
	we get the following Fourier expansions
	\begin{equation}
		\begin{split}
			\mathbf{V}^{0} &= \sum_{n = 0}^{\infty} \Big(V^{0, 1}_{n} \Phi_{n}^{1}, V^{0, 2}_{n} \Phi_{n}^{2}, V^{0, 3}_{n} \Phi_{n}^{3}\Big), \\
			\mathbf{V}_{\tau}^{0}(\cdot, t) &= \sum_{n = 0}^{\infty} \Big(V^{0, 1}_{\tau, n} \Phi_{n}^{1}, V^{0, 2}_{n} \Phi_{n}^{2}, V^{0, 3}_{n} \Phi_{n}^{3}\Big) \text{ for } t \in [-\tau, 0], \\
			\mathbf{F}(\cdot, t) &= \sum_{n = 0}^{\infty} \Big(F^{1}_{n} \Phi_{n}^{1}, F^{2}_{n} \Phi_{n}^{2}, F^{3}_{n} \Phi_{n}^{3}\Big) \text{ for } t \geq 0
		\end{split}
		\notag
	\end{equation}
	uniformly in $\bar{\Omega}$.
	Similarly, the solution $\mathbf{V}$ can be expanded into Fourier series
	\begin{equation}
		\mathbf{V}(\cdot, t) = \sum_{n = 0}^{\infty} \big(V_{n}^{1}(t) \Phi_{n}^{1}, V_{n}^{2}(t) \Phi_{n}^{1}, V_{n}^{3}(t) \Phi_{n}^{1}\big) \notag
	\end{equation}
	for some $V_{n, k} \in C^{0}\big([-\tau, \infty), \mathbb{R}\big) \cap C^{1}\big([0, \infty), \mathbb{R}\big)$, $n \in \mathbb{N}_{0}$, $k = 1, 2, 3$,
	to be determined later.
	Using this ansatz and letting
	\begin{equation}
		\mathbf{B}_{n} :=
		\begin{pmatrix}
			 0             & a \nu_{n} & -b \nu_{n} \\
			-\nu_{n}   & 0             &  0 \\
			 d \nu_{n} & 0             &  c \nu_{n}^{2}
		\end{pmatrix}, \notag
	\end{equation}
	we observe that Equations (\ref{EQUATION_FIRST_ORDER_SYSTEM_ABSTRACT_FORM_B_INFTY_1})--(\ref{EQUATION_FIRST_ORDER_SYSTEM_ABSTRACT_FORM_B_INFTY_2})
	decompose into a sequence of ordinary delay differential equations
	\begin{align}
		\dot{\mathbf{V}}_{n}(t) &= \mathbf{B}_{n} \mathbf{V}_{n}(t - \tau) + \mathbf{F}_{n}(t) \text{ for } t > 0, \label{EQUATION_SECTION_EXPLICIT_SOLUTION_N_COEFFICIENT_1} \\
		\mathbf{V}_{n}(0) &= \mathbf{V}_{n}^{0}, \quad
		\mathbf{V}_{n}(t) = \mathbf{V}_{\tau, n}^{0}(t) \text{ for } t \in (-\tau, 0). \label{EQUATION_SECTION_EXPLICIT_SOLUTION_N_COEFFICIENT_2}
	\end{align}
	
	By the virtue of Theorem \ref{THEOREM_DELAY_ODE},
	for any $n \in \mathbb{N}_{0}$, the unique solution to Equations (\ref{EQUATION_SECTION_EXPLICIT_SOLUTION_N_COEFFICIENT_1})--(\ref{EQUATION_SECTION_EXPLICIT_SOLUTION_N_COEFFICIENT_2}) is given by
	\begin{equation}
		\mathbf{V}_{n}(t) = \left\{
		\begin{array}{cl}
			\mathbf{V}^{0}_{\tau, n}(t), & t \in [-\tau, 0), \\
			\mathbf{V}^{0}_{n}, & t = 0, \\
			{\exp_{\tau}(-\mathbf{B}_{n}, t - \tau) \mathbf{V}^{0}_{n} -
			\mathbf{B}_{n} \int_{-\tau}^{0} \exp_{\tau}(-\mathbf{B}_{n}, t - 2\tau - s) \mathbf{V}^{0}_{\tau, n}(s) \mathrm{d}s + \atop
			\int_{0}^{t} \exp_{\tau}(-\mathbf{B}_{n}, t - \tau - s) \mathbf{F}_{n}(s) \mathrm{d}s}, & t \in (0, T]
		\end{array}\right.
		\label{EQUATION_SECTION_EXPLICIT_SOLUTION_N_COEFFICIENT_SOLUTION}
	\end{equation}
	To explicitly compute the function given in Equation (\ref{EQUATION_SECTION_EXPLICIT_SOLUTION_N_COEFFICIENT_SOLUTION}),
	we have to diagonalize the matrix $\mathbf{B}_{n}$.
	
	\begin{lemma}
		\label{LEMMA_EIGENVALUES_OF_A_N}
		Let
		\begin{equation}
			\Delta_{0} = c^{2} \nu_{n}^{4} - 3 (a + bd) \nu_{n}^{2}, \quad
			\Delta_{1} = -2 c^{3} \nu_{n}^{6} + 9 c (a + bd) \nu_{n}^{4} - 27 ac \nu_{n}^{4}, \quad
			C = \sqrt[3]{\tfrac{1}{2} \big(\Delta_{1} + \sqrt{\Delta_{1}^{2} - 4 \Delta_{0}^{3}}\big)} \notag
		\end{equation}
		where $\sqrt{\cdot}$ and $\sqrt[3]{\cdot}$
		stand for the main branch of complex square and cubic roots.
		The spectrum of $\mathbf{B}_{n}$ consists of three eigenvalues
		\begin{equation}
			\mu_{n, k} =
			\begin{cases}
				0, & n = 0, \\
				\tfrac{1}{3} \big(c \nu_{n}^{2} - C e^{2 i k \pi/3} - e^{-2 i k \pi/3} \tfrac{\Delta_{0}}{C}\big), &
				\text{otherwise}
			\end{cases}\notag
		\end{equation}
		for $k = 0, 1, 2$ with $i$ denoting the imaginary unit.
	\end{lemma}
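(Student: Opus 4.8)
The plan is to reduce the eigenvalue problem for $\mathbf{B}_n$ to finding the roots of its characteristic polynomial and then to invoke the classical Cardano formula for a cubic. First I would dispose of the degenerate case $n = 0$: since $\nu_0 = 0$, every entry of $\mathbf{B}_0$ vanishes, so $\mathbf{B}_0$ is the zero matrix and hence $\mu_{0, k} = 0$ for $k = 0, 1, 2$, as claimed.

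For $n \geq 1$, I would compute the characteristic polynomial $\chi(\lambda) := \det(\lambda I - \mathbf{B}_n)$ by a cofactor expansion along the first row. A direct calculation gives the monic cubic
\[
	\chi(\lambda) = \lambda^3 - c \nu_n^2 \lambda^2 + (a + bd) \nu_n^2 \lambda - ac \nu_n^4,
\]
so that the spectrum of $\mathbf{B}_n$ consists precisely of the three (possibly complex) zeros of $\chi$. Writing $\chi(\lambda) = \lambda^3 + p \lambda^2 + q \lambda + r$ with $p = -c\nu_n^2$, $q = (a + bd)\nu_n^2$ and $r = -ac\nu_n^4$ sets up the next step.

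It then remains to solve this cubic explicitly. The general cubic formula expresses the three roots of $\lambda^3 + p\lambda^2 + q\lambda + r$ through the auxiliary quantities $\Delta_0 = p^2 - 3q$, $\Delta_1 = 2p^3 - 9pq + 27r$ and $C = \sqrt[3]{\tfrac{1}{2}(\Delta_1 + \sqrt{\Delta_1^2 - 4\Delta_0^3})}$ as $\lambda_k = -\tfrac{1}{3}\big(p + e^{2ik\pi/3} C + e^{-2ik\pi/3}\tfrac{\Delta_0}{C}\big)$ for $k = 0, 1, 2$. Substituting $p, q, r$, I would verify by a routine algebraic computation that these $\Delta_0$, $\Delta_1$ and $C$ coincide with the expressions in the statement (e.g. $\Delta_0 = c^2\nu_n^4 - 3(a + bd)\nu_n^2$), and that the leading term $-\tfrac{1}{3} p = \tfrac{1}{3} c \nu_n^2$ turns the formula for $\lambda_k$ into exactly the asserted $\mu_{n, k}$.

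The determinant evaluation and the substitution are purely mechanical; the only genuine subtlety is the branch bookkeeping for the complex square and cube roots. I would emphasize that the displayed formula tacitly assumes $C \neq 0$, which is the case unless $\mathbf{B}_n$ has a triple eigenvalue (equivalently $\Delta_0 = \Delta_1 = 0$); in that exceptional configuration the three roots coalesce and must be read off directly. Fixing the principal branches of $\sqrt{\cdot}$ and $\sqrt[3]{\cdot}$ as in the statement guarantees that the three choices $k = 0, 1, 2$ recover the three roots of $\chi$, and since $\chi$ has real coefficients they occur either as a triple of reals or as one real root together with a complex-conjugate pair. This is where I would expect the main (though still modest) effort to go.
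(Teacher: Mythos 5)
Your proposal is correct and follows essentially the same route as the paper: dispose of $n=0$ since $\mathbf{B}_{0}$ is the zero matrix, compute the characteristic polynomial $\mu^{3} - c\nu_{n}^{2}\mu^{2} + (a+bd)\nu_{n}^{2}\mu - ac\nu_{n}^{4}$, and read off the roots from the standard cubic formula with $\Delta_{0} = p^{2}-3q$, $\Delta_{1} = 2p^{3}-9pq+27r$. Your extra remarks on the branch choices and the degenerate case $C=0$ go slightly beyond what the paper records (which instead notes that skew-symmetrizability forces one real and two complex-conjugate eigenvalues), but the substance is the same.
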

	
	\begin{proof}
		For $n = 0$, we have $\nu_{n} = 0$ and therefore $\mathbf{B}_{n} = 0_{3 \times 3}$.
		Hence, $0$ is the only eigenvalue of $\mathbf{B}_{n}$ with an algebraic multiplicity of $3$.
		
		Now, let us assume $n > 1$.
		To compute the eigenvalues of $\mathbf{B}_{n}$, we consider the characteristic polynomial
		\begin{equation}
			P_{n}(\mu) := \mathrm{det}(\mathbf{B}_{n} - \mu \mathbf{I}_{3 \times 3}) =
			\mu^{3} - c \nu_{n}^{2} \mu^{2} + (a + bd) \nu_{n}^{2} \mu - ac \nu_{n}^{4} \text{ for } \mu \in \mathbb{C}.
			\label{EQUATION_CHARACTERISTIC_POLYNOM_A_N}
		\end{equation}
		Since the matrix
		\begin{equation}
			\mathbf{B}_{n} :=
			\begin{pmatrix}
				1 & 0            & 0 \\
				0 & \tfrac{1}{a} & 0 \\
				0 & 0            & \tfrac{b}{d}
			\end{pmatrix}
			\begin{pmatrix}
				 0             & a \nu_{n} & -b \nu_{n} \\
				-a \nu_{n} & 0             &  0 \\
				 b \nu_{n} & 0             &  \tfrac{cd}{b} \nu_{n}^{2}
			\end{pmatrix} \notag
		\end{equation}
		has real components and is skew-symmetrizable, is has to possess one real and two complex-conjugate eigenvalues.
		Thus, introducing the expressions
		\begin{equation}
			\Delta_{0} = c^{2} \nu_{n}^{4} - 3 (a + bd) \nu_{n}^{2}, \quad
			\Delta_{1} = -2 c^{3} \nu_{n}^{6} + 9 c (a + bd) \nu_{n}^{4} - 27 ac \nu_{n}^{4}, \quad
			C = \sqrt[3]{\tfrac{1}{2} \big(\Delta_{1} + \sqrt{\Delta_{1}^{2} - 4 \Delta_{0}^{3}}\big)}, \notag
		\end{equation}
		we obtain the three roots $\mu_{n, 1}, \mu_{n, 2}, \mu_{n, 3}$ of $\tilde{P}_{n}$ (cf. \cite[p. 179]{PrVe1992})
		\begin{equation}
			\mu_{n, k} = \tfrac{1}{3} \big(c \nu_{n}^{2} - C e^{2 i k \pi/3} - e^{-2 i k \pi/3} \tfrac{\Delta_{0}}{C}\big) \notag
		\end{equation}
		where $\sqrt{\cdot}$ and $\sqrt[3]{\cdot}$
		stand for the main branch of complex square and cubic roots.
	\end{proof}
	
	\begin{lemma}
		Eigenvectors $v_{n, k}$, $k = 0, 1, 2$, of $\mathbf{B}_{n}$ corresponding
		to the eigenvalues $\mu_{n, k}$ of $\mathbf{B}_{n}$ from Lemma \ref{LEMMA_EIGENVALUES_OF_A_N} are given by
		\begin{equation}
			\mathbf{v}_{n, k} =
			\left\{
			\begin{array}{cc}
				\mathbf{e}_{k}, & \text{if } n = 0, \\
				\begin{pmatrix}
					-b \nu_{n} \mu_{n, k} \\
					b \nu_{n}^{2} \\
					a\nu_{n}^{2} + \mu_{n, k}^{2}
				\end{pmatrix}, & \text{otherwise}. \\
			\end{array}\right.
		\end{equation}
		with
		$\mathbf{e}_{1} = (1, 0, 0)^{T}$, $\mathbf{e}_{2} = (0, 1, 0)^{T}$, $\mathbf{e}_{3} = (0, 0, 1)^{T}$.
	\end{lemma}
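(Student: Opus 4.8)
The plan is to verify the claimed formulas by solving the eigenvalue equation $\mathbf{B}_n \mathbf{v} = \mu_{n,k} \mathbf{v}$ directly, extracting the three components one at a time and recognizing the single remaining scalar constraint as the characteristic relation already secured in Lemma \ref{LEMMA_EIGENVALUES_OF_A_N}. First I would dispose of the degenerate case $n = 0$: here $\nu_0 = 0$ forces $\mathbf{B}_0 = 0_{3 \times 3}$, so $0$ is a triple eigenvalue and every nonzero vector is an eigenvector; in particular $\mathbf{e}_1, \mathbf{e}_2, \mathbf{e}_3$ furnish a complete system, as asserted.

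For $n \geq 1$ I would fix $k \in \{0, 1, 2\}$, abbreviate $\mu := \mu_{n,k}$, and write out $\mathbf{B}_n \mathbf{v} = \mu \mathbf{v}$ for $\mathbf{v} = (v_1, v_2, v_3)^T$ componentwise:
\begin{align*}
	a \nu_n v_2 - b \nu_n v_3 &= \mu v_1, \\
	-\nu_n v_1 &= \mu v_2, \\
	d \nu_n v_1 + c \nu_n^2 v_3 &= \mu v_3.
\end{align*}
Normalizing the middle component to $v_2 = b \nu_n^2$, the second equation immediately gives $v_1 = -b \nu_n \mu$, and inserting this into the first equation and cancelling the common factor $-b \nu_n$ (nonzero since $b, \nu_n > 0$) yields $v_3 = a \nu_n^2 + \mu^2$. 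This reproduces exactly the vector $\mathbf{v}_{n,k}$ displayed in the statement, and it is automatically nonzero because its middle entry $b \nu_n^2$ does not vanish.

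The only substantive step is the third equation, which is where the spectral information enters. Substituting $v_1 = -b \nu_n \mu$ and $v_3 = a \nu_n^2 + \mu^2$ and expanding, I expect the identity to collapse to
\begin{equation*}
	\mu^3 - c \nu_n^2 \mu^2 + (a + bd) \nu_n^2 \mu - a c \nu_n^4 = 0,
\end{equation*}
which is precisely $P_n(\mu) = 0$ for the characteristic polynomial $P_n$ from Equation (\ref{EQUATION_CHARACTERISTIC_POLYNOM_A_N}). Since every $\mu_{n,k}$ is by construction a root of $P_n$ by Lemma \ref{LEMMA_EIGENVALUES_OF_A_N}, this relation holds automatically, completing the verification that $\mathbf{v}_{n,k}$ is an eigenvector associated with $\mu_{n,k}$.

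I do not anticipate any real difficulty: the argument is a short linear-algebra computation in which the first two component equations fix the eigenvector up to scaling and the third merely re-encodes the eigenvalue condition. The one point deserving a remark is the nonvanishing of the eigenvectors, handled cleanly by the middle component $b \nu_n^2 \neq 0$; and should a genuine eigenbasis be required for the diagonalization of $\mathbf{B}_n$ underlying the explicit solution formula, I would invoke the observation from Lemma \ref{LEMMA_EIGENVALUES_OF_A_N} that the three eigenvalues comprise one real value and a complex-conjugate pair, so that whenever they are distinct the corresponding eigenvectors are linearly independent.
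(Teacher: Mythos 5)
Your proposal is correct and follows essentially the same route as the paper: both determine the eigenvector from two of the three scalar equations of $(\mu_{n,k}\mathbf{I}-\mathbf{B}_n)\mathbf{v}=\mathbf{0}$ and discard the remaining one as redundant, the paper by a Gauss--Jordan reduction plus a rank/singularity argument and you by checking explicitly that the third equation collapses to $P_n(\mu_{n,k})=0$. Your version is, if anything, slightly more transparent about where the characteristic polynomial enters, and your componentwise verification of the displayed vector is accurate.
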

	
	\begin{proof}
		Since the first case $n = 0$ is obvious, we only consider the case $n > 1$.
		For $k \in \{0, 1, 2\}$, we consider the matrix
		\begin{equation}
			\mu_{n, k} \mathbf{I}_{3 \times 3} - \mathbf{B}_{n} =
			\begin{pmatrix}
				 \mu_{n, k}    & -a \nu_{n} & b \nu_{n} \\
				 \nu_{n}   &  \mu_{n, k}    & 0 \\
				-d \nu_{n} &  0             & \mu_{n, k} - c \nu_{n}^{2}
			\end{pmatrix}. \notag
		\end{equation}
		The latter is singular since $\alpha_{n, k}$ is an eigenvalue of $\mathbf{B}_{n}$.
		Further, due to the fact
		\begin{equation}
			\mathrm{det}(\mathbf{B}_{n}) = ac \nu_{n}^{4} > 0, \notag
		\end{equation}
		$\mathbf{B}_{n}$ is invertible and, therefore, $\mu_{n, k} \neq 0$.
		We want to find a nontrivial vector $\mathbf{v}_{n, k} \in \mathbb{R}^{3}$ satisfying
		\begin{equation}
			\big(\alpha_{n, k} \mathbf{I}_{3 \times 3} - \mathbf{B}_{n}\big) v_{n, k} = \mathbf{0}_{3 \times 1}.
			\label{EQUATION_EIGENVECTOR}
		\end{equation}
		Thus, we can apply a Gauss-Jordan iteration to the former matrix and find
		\begin{equation}
			\mu_{n, k} \mathbf{I}_{3 \times 3} - \mathbf{B}_{n} \sim
			\begin{pmatrix}
				\mu_{n, k} & -a \nu_{n}                      &  b \nu_{n} \\
				0          &  \mu_{n, k}^{2} - a \nu_{n}^{2} & -b \nu_{n}^{2} \\
				0          & -ad \nu_{n}^{2}                 &  \mu_{n, k}^{2} - c \nu_{n}^{2} \mu_{n, k} + bd \nu_{n}^{2}
			\end{pmatrix}.
			\notag
		\end{equation}
		Since the latter matrix must be singular, the third row must be proportional to the second one.
		Thus, Equation (\ref{EQUATION_EIGENVECTOR}) is equivalent with
		\begin{equation}
			\begin{pmatrix}
				\mu_{n, k} & -a \nu_{n}                     &  b \nu_{n} \\
				0          & \mu_{n, k}^{2} + a \nu_{n}^{2} & -b \nu_{n}^{2}
			\end{pmatrix} \mathbf{v}_{n, k} = \mathbf{0}_{3 \times 1}.
			\notag
		\end{equation}
		Since the rank of this matrix is 2, the equation above yields only one eigenvector
		\begin{equation}
			\mathbf{v}_{n, k} =
			\begin{pmatrix}
				-b \nu_{n} \mu_{n, k} \\
				b \nu_{n}^{2} \\
				a \nu_{n}^{2} + \mu_{n, k}^{2}
			\end{pmatrix}
			\notag
		\end{equation}
		being determined up to a multiplicative constant.
	\end{proof}
	
	Note that $\mathbf{v}_{n, 1}, \mathbf{v}_{n, 2}, \mathbf{v}_{n, 3}$ are linearly independent,
	but, in general, not orthonormal.

	Letting now
	\begin{equation}
		\mathbf{D}_{n} := \mathrm{diag}(\mu_{n, 1}, \mu_{n, 2}, \mu_{n, 3}), \notag
	\end{equation}
	we obtain a singular value decomposition for $\mathbf{B}_{n}$
	\begin{equation}
		\mathbf{B}_{n} = \mathbf{S}_{n} \mathbf{D}_{n} \mathbf{S}_{n}^{-1} \notag
	\end{equation}
	with an invertible matrix
	\begin{equation}
		\mathbf{S}_{n} = (\mathbf{v}_{n, 1} \; \mathbf{v}_{n, 2} \; \mathbf{v}_{n, 3})^{T}. \notag
	\end{equation}
	
	Exploiting now Corollary \ref{COROLLARY_DELAYED_EXPONENTIAL_DIAGONALIZED} from Appendix,
	Equation (\ref{EQUATION_SECTION_EXPLICIT_SOLUTION_N_COEFFICIENT_SOLUTION}) can finally be written as
	\begin{equation}
		\mathbf{V}_{n}(t) = \left\{
		\begin{array}{cl}
			\mathbf{V}^{0}_{\tau, n}(t), & t \in [-\tau, 0), \\
			\mathbf{V}^{0}_{n}, & t = 0, \\
			{\mathbf{S} \exp_{\tau}(-\mathbf{D}_{n}, t - \tau) \mathbf{S}^{-1} V^{0}_{n} -
			\mathbf{S} \mathbf{D}_{n} \int_{-\tau}^{0} \exp_{\tau}(-\mathbf{D}_{n}, t - 2\tau - s) \mathbf{S}^{-1} \mathbf{V}^{0}_{\tau, n}(s) \mathrm{d}s + \atop
			\int_{0}^{t} \mathbf{S} \exp_{\tau}(-\mathbf{D}_{n}, t - \tau - s) \mathbf{S}^{-1} \mathbf{F}_{n}(s) \mathrm{d}s}, & t \geq 0
		\end{array}\right.
		\notag
	\end{equation}
	where the inverse of $\mathbf{S}_{n}$ is given by the Laplace formula
	\begin{equation}
		\mathbf{S}^{-1} =
		\frac{
		\begin{pmatrix}
			\phantom{-}S_{n}^{22} S_{n}^{33} - S_{n}^{23} S_{n}^{32} & -S_{n}^{12} S_{n}^{33} + S_{n}^{13} S_{n}^{32}           & \phantom{-}S_{n}^{12} S_{n}^{23} - S_{n}^{13} S_{n}^{22} \\
			-S_{n}^{21} S_{n}^{33} + S_{n}^{23} S_{n}^{31}           & \phantom{-}S_{n}^{11} S_{n}^{33} - S_{n}^{13} S_{n}^{31} & -S_{n}^{11} S_{n}^{23} + S_{n}^{13} S_{n}^{21} \\
			\phantom{-}S_{n}^{21} S_{n}^{32} - S_{n}^{22} S_{n}^{32} & -S_{n}^{11} S_{n}^{32} + S_{n}^{12} S_{n}^{31}           & \phantom{-}S_{n}^{11} S_{n}^{22} - S_{n}^{12} S_{n}^{21}
		\end{pmatrix}}
		{S_{n}^{11} S_{n}^{22} S_{n}^{33} + S_{n}^{12} S_{n}^{23} S_{n}^{31} + S_{n}^{13} S_{n}^{21} S_{n}^{32} -
		 S_{n}^{31} S_{n}^{22} S_{n}^{13} - S_{n}^{32} S_{n}^{23} S_{n}^{11} - S_{n}^{33} S_{n}^{21} S_{n}^{12}}. \notag
	\end{equation}

\begin{appendix}
	\addcontentsline{toc}{section}{Appendix: Delayed Exponential Function}
	\section*{Appendix: Delayed Exponential Function}
	Let $X$ be a real or a complex Hilbert space and let $L(X)$ denote the space of bounded linear operator on $X$.
	For $\tau > 0$ and $\mathcal{B} \in L(X)$, we consider first the following scalar ordinary delay differential equation
	\begin{equation}
		\begin{split}
			\partial_{t} u(t) &= \mathcal{B} u(t - \tau) + f(t) \text{ for } t > 0, \\
			u(0) &= u^{0}, \\
			u(t) &= u^{0}_{\tau} \text{ for } t \in (-\tau, 0).
		\end{split}
		\label{EQUATION_ORDINARY_DELAY_DE}
	\end{equation}
	for some $u^{0} \in X$, $u^{0}_{\tau} \in L^{2}(-\tau, 0; X)$ and $f \in L^{2}_{\mathrm{loc}}(0, \infty; X)$.

	Following the approach in \cite{KhuIvKo2009},
	we introduce the delayed exponential function
	\begin{equation}
		\begin{split}
			\exp_{\tau}(\mathcal{B}, \cdot) &\colon \mathbb{R} \to L(X), \\
			\exp_{\tau}(\mathcal{B}, t) &:=
			\begin{cases}
				0_{L(X)}, & t < -\tau, \\
				\mathrm{id}_{X} + \sum\limits_{k = 1}^{\left\lfloor \tfrac{t}{\tau}\right\rfloor + 1} \frac{(t - (k - 1) \tau)^{k}}{k!} \mathcal{B}^{k}, & t \geq -\tau.
			\end{cases}
		\end{split}
		\notag
	\end{equation}
	Figure \ref{FIGURE_DELAYED_EXPONENTIAL_FUNCTION} displays the delayed exponential function for
	the case that $\mathcal{B}$ is a real number.
	\begin{figure}[h!]
		\centering
		\includegraphics[scale = 0.5]{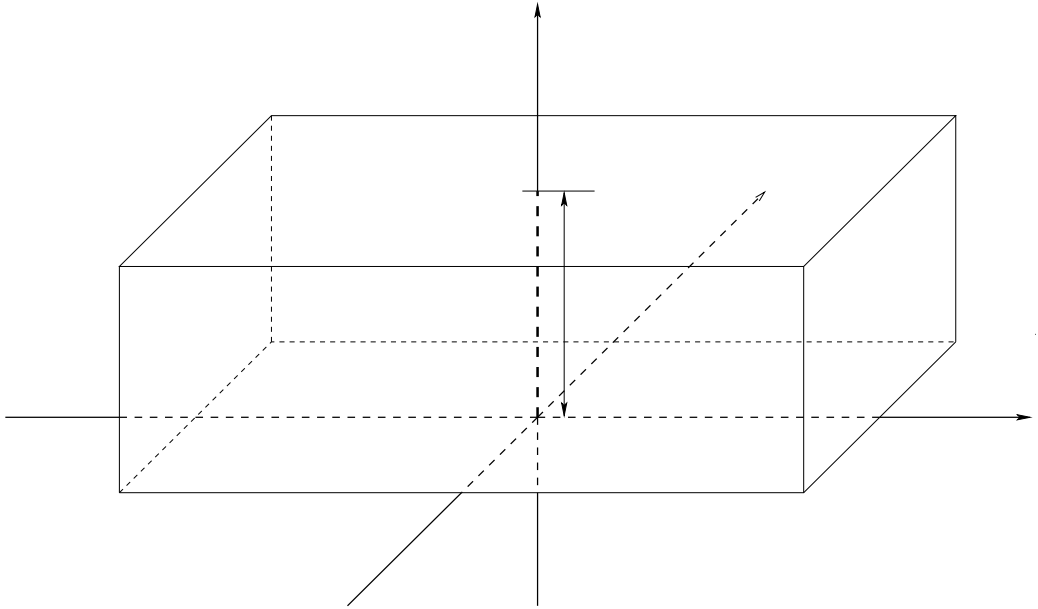}
		\caption{Delayed exponential function. \label{FIGURE_DELAYED_EXPONENTIAL_FUNCTION}}
	\end{figure}
	
	Since $\exp_{\tau}(\mathcal{B}, t)$ is an operator polynomial in $\mathcal{B}$ piecewise with respect to $t$, we obviously have the following representation.
	\begin{theorem}
		Let $\mathcal{S} \colon X \to X$ be an isomorphism, i.e., $\mathcal{S}, \mathcal{S}^{-1} \in L(X)$.
		Then
		\begin{equation}
			\exp_{\tau}(\mathcal{B}, t) = \mathcal{S} \exp_{\tau}(\mathcal{S}^{-1} \mathcal{B} \mathcal{S}, t) \mathcal{S}^{-1} \text{ for } t \in \mathbb{R}. \notag
		\end{equation}
	\end{theorem}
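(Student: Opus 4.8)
The plan is to exploit the fact, already noted in the excerpt, that $\exp_{\tau}(\mathcal{B}, t)$ is for each fixed $t$ a \emph{finite} operator polynomial in $\mathcal{B}$. The entire statement then reduces to the observation that conjugation by an isomorphism is a unital algebra homomorphism. Concretely, the only fact I really need is the power identity
\begin{equation}
	(\mathcal{S}^{-1} \mathcal{B} \mathcal{S})^{k} = \mathcal{S}^{-1} \mathcal{B}^{k} \mathcal{S} \quad \text{for every } k \in \mathbb{N}_{0}, \notag
\end{equation}
which follows by a trivial induction on $k$ (the case $k = 0$ reads $\mathrm{id}_{X} = \mathcal{S}^{-1} \mathrm{id}_{X} \mathcal{S}$, and the inductive step inserts a factor $\mathcal{S}\mathcal{S}^{-1} = \mathrm{id}_{X}$ between the two outermost copies). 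Equivalently, $\mathcal{S} (\mathcal{S}^{-1} \mathcal{B} \mathcal{S})^{k} \mathcal{S}^{-1} = \mathcal{B}^{k}$.

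First I would dispose of the trivial case $t < -\tau$, where by definition both $\exp_{\tau}(\mathcal{B}, t)$ and $\exp_{\tau}(\mathcal{S}^{-1} \mathcal{B} \mathcal{S}, t)$ equal $0_{L(X)}$, so the asserted identity holds since $\mathcal{S} \, 0_{L(X)} \, \mathcal{S}^{-1} = 0_{L(X)}$. The content of the theorem therefore lives entirely in the range $t \geq -\tau$.

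For $t \geq -\tau$, I would simply substitute the series definition of $\exp_{\tau}(\mathcal{S}^{-1} \mathcal{B} \mathcal{S}, t)$ into the right-hand side and compute
\begin{align*}
	\mathcal{S} \exp_{\tau}(\mathcal{S}^{-1} \mathcal{B} \mathcal{S}, t) \mathcal{S}^{-1}
	&= \mathcal{S} \Bigg(\mathrm{id}_{X} + \sum_{k = 1}^{\lfloor t/\tau \rfloor + 1} \frac{(t - (k - 1)\tau)^{k}}{k!} (\mathcal{S}^{-1} \mathcal{B} \mathcal{S})^{k}\Bigg) \mathcal{S}^{-1} \\
	&= \mathcal{S} \mathcal{S}^{-1} + \sum_{k = 1}^{\lfloor t/\tau \rfloor + 1} \frac{(t - (k - 1)\tau)^{k}}{k!} \, \mathcal{S} (\mathcal{S}^{-1} \mathcal{B} \mathcal{S})^{k} \mathcal{S}^{-1} \\
	&= \mathrm{id}_{X} + \sum_{k = 1}^{\lfloor t/\tau \rfloor + 1} \frac{(t - (k - 1)\tau)^{k}}{k!} \, \mathcal{B}^{k} = \exp_{\tau}(\mathcal{B}, t),
\end{align*}
where I have pulled the bounded operators $\mathcal{S}$ and $\mathcal{S}^{-1}$ through the finite sum by linearity and applied the power identity termwise. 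This completes the argument.

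I do not expect any genuine obstacle here: because the sum is finite for each fixed $t$, no questions of convergence or of interchanging limits with the (bounded) operators $\mathcal{S}, \mathcal{S}^{-1}$ arise, and the whole proof is purely algebraic. The one point worth stating explicitly, rather than assuming, is the $k = 0$ instance of the power identity, since it is what produces the leading term $\mathrm{id}_{X}$ on both sides; everything else is bookkeeping.
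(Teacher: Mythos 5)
Your argument is correct and is exactly the justification the paper itself relies on: the paper offers no formal proof, only the remark that $\exp_{\tau}(\mathcal{B}, t)$ is piecewise (in $t$) a finite operator polynomial in $\mathcal{B}$, from which the conjugation identity is declared obvious. You have simply written out that same observation in full, via the power identity $(\mathcal{S}^{-1}\mathcal{B}\mathcal{S})^{k} = \mathcal{S}^{-1}\mathcal{B}^{k}\mathcal{S}$ applied termwise to the finite sum.
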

	
	\begin{corollary}
		\label{COROLLARY_DELAYED_EXPONENTIAL_DIAGONALIZED}
		If $X \in \{\mathbb{R}^{d}, \mathbb{C}^{d}\}$, $d \in \mathbb{N}$, 
		and $\mathcal{B} = \mathbf{B}$ is diagonalizable over $\mathbb{C}$, i.e.,
		if there exists a diagonal matrix $\mathbf{D} = \mathrm{diag}(\lambda_{1}, \dots, \lambda_{n})$,
		$\lambda_{1}, \dots, \lambda_{d} \in \mathbb{C}$,
		and an invertible $\mathbf{S} \in \mathbb{C}^{d \times d}$ such that
		$\mathbf{A} = \mathbf{S} \mathbf{D} \mathbf{S}^{-1}$, then
		\begin{equation}
			\exp_{\tau}(\mathbf{B}, t) = \mathbf{S} \exp_{\tau}(\mathbf{D}, t) \mathbf{S}^{-1} =
			\mathbf{S} \, \mathrm{diag}\big(\exp_{\tau}(\lambda_{1}, t), \dots, \exp_{\tau}(\lambda_{n}, t)\big) \mathbf{S}^{-1}
			\text{ for } t \in \mathbb{R}. \notag
		\end{equation}
	\end{corollary}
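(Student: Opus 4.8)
The plan is to derive this directly from the preceding Theorem, since the diagonalization $\mathbf{B} = \mathbf{S}\mathbf{D}\mathbf{S}^{-1}$ is precisely a similarity transformation by the isomorphism $\mathcal{S} = \mathbf{S}$. First I would apply the Theorem with this choice of $\mathcal{S}$: because $\mathbf{S}^{-1}\mathbf{B}\mathbf{S} = \mathbf{D}$, it immediately yields
\[
	\exp_{\tau}(\mathbf{B}, t) = \mathbf{S}\,\exp_{\tau}(\mathbf{D}, t)\,\mathbf{S}^{-1} \quad \text{for } t \in \mathbb{R},
\]
which is the first asserted equality. It then remains only to identify $\exp_{\tau}(\mathbf{D}, t)$ with the diagonal matrix of scalar delayed exponentials.

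For this second step I would argue directly from the definition of $\exp_{\tau}$. For $t < -\tau$ both sides vanish. For $t \geq -\tau$, the defining expression is a finite polynomial in the matrix argument, namely $\mathrm{id} + \sum_{k = 1}^{\lfloor t/\tau\rfloor + 1} \tfrac{(t - (k-1)\tau)^{k}}{k!}\mathbf{D}^{k}$. Since $\mathbf{D}$ is diagonal, each power satisfies $\mathbf{D}^{k} = \mathrm{diag}(\lambda_{1}^{k}, \dots, \lambda_{d}^{k})$, and both scalar multiplication and matrix addition act entrywise on diagonal matrices. Hence the entire sum is diagonal, and its $j$-th diagonal entry equals $1 + \sum_{k = 1}^{\lfloor t/\tau\rfloor + 1} \tfrac{(t - (k-1)\tau)^{k}}{k!}\lambda_{j}^{k} = \exp_{\tau}(\lambda_{j}, t)$ by definition. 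This gives
\[
	\exp_{\tau}(\mathbf{D}, t) = \mathrm{diag}\big(\exp_{\tau}(\lambda_{1}, t), \dots, \exp_{\tau}(\lambda_{d}, t)\big),
\]
and combining with the first step completes the argument.

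I do not anticipate any genuine obstacle here: the statement is a formal consequence of the similarity invariance established in the Theorem together with the elementary fact that powers of a diagonal matrix are diagonal. The only point requiring a moment of care is that $\exp_{\tau}$ is defined piecewise in $t$, so the diagonal-entry identification must be read off separately on each interval $((k-1)\tau, k\tau]$; but since the truncation index $\lfloor t/\tau\rfloor + 1$ depends only on $t$ and not on the matrix argument, the same polynomial identity holds on every such interval and no difficulty arises.
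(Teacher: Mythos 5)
Your argument is correct and is exactly the route the paper intends: the corollary is stated as an immediate consequence of the preceding similarity-invariance theorem (applied with $\mathcal{S} = \mathbf{S}$), combined with the observation that the defining piecewise polynomial of $\exp_{\tau}(\mathbf{D}, t)$ acts entrywise on a diagonal matrix. The paper gives no explicit proof, so your careful spelling-out of the diagonal-entry identification on each interval $((k-1)\tau, k\tau]$ only makes the implicit argument precise.
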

	
	According to \cite[Theorem 3.12]{KhuPoRa2013}, we have the following well-posedness result for Equation (\ref{EQUATION_ORDINARY_DELAY_DE}).
	\begin{theorem}
		\label{THEOREM_DELAY_ODE}
		The delay differential equation (\ref{EQUATION_ORDINARY_DELAY_DE}) possesses a unique strong
		$u \in L^{2}_{\mathrm{loc}}(-\tau, \infty; X) \cap H^{1}_{\mathrm{loc}}(0, \infty; X)$ given by
		\begin{equation}
			u(t) = \left\{
			\begin{array}{cl}
				\varphi(t), & t \in [-\tau, 0), \\
				u^{0}, & t = 0, \\
				{\exp_{\tau}(-\mathcal{B}, t - \tau) u^{0} -
				\mathcal{B} \int_{-\tau}^{0} \exp_{\tau}(-\mathcal{B}, t - 2\tau - s) u^{0}_{\tau}(s) \mathrm{d}s + \atop
				\int_{0}^{t} \exp_{\tau}(-\mathcal{B}, t - \tau - s) f(s) \mathrm{d}s}, & t \geq 0.
			\end{array}\right.
			\label{EQUATION_SOLUTION_DELAY_ODE}
		\end{equation}
		If $u^{0}_{\tau}$ lies in $C^{0}\big([-\tau, 0], X\big)$ and satisfies the compatibility condition $u^{0}_{\tau}(0) = u^{0}$,
		then the strong solution is even a classical solution, i.e.,
		$u \in C^{0}\big([-\tau, \infty), X\big) \cap C^{1}\big([0, \infty), X\big)$.
	\end{theorem}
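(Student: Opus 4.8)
The plan is to construct the solution by the classical method of steps and then match the outcome against the closed form (\ref{EQUATION_SOLUTION_DELAY_ODE}). Since the delay $\tau$ is constant, on each window $I_{k} := [(k-1)\tau, k\tau]$, $k \in \mathbb{N}$, the delayed argument $u(\cdot - \tau)$ ranges only over $I_{k-1}$, where $u$ is already known. Hence on $I_{k}$ the problem degenerates to the \emph{non}-delayed equation $\partial_{t} u(t) = \mathcal{B} u(t - \tau) + f(t)$ whose right-hand side is known and lies in $L^{2}(I_{k}; X)$ (here one uses $\mathcal{B} \in L(X)$, $u^{0}_{\tau} \in L^{2}$ and $f \in L^{2}_{\mathrm{loc}}$); integrating from the left endpoint of $I_{k}$ and prescribing the value inherited from $I_{k-1}$ produces a unique $u|_{I_{k}} \in H^{1}(I_{k}; X)$. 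Starting from the history $u = u^{0}_{\tau}$ on $(-\tau, 0)$ and $u(0) = u^{0}$ and inducting over $k$ yields a unique strong solution $u \in L^{2}_{\mathrm{loc}}(-\tau, \infty; X) \cap H^{1}_{\mathrm{loc}}(0, \infty; X)$; only across $t = 0$, where $u^{0}_{\tau}(0^{-})$ need not equal $u^{0}$, can continuity fail, which is exactly why the global regularity is merely $L^{2}_{\mathrm{loc}}$ on $(-\tau, \infty)$. Uniqueness is immediate: the difference of two strong solutions solves the homogeneous problem with vanishing history and $u(0) = 0$, and the same stepwise integration forces it to vanish on every $I_{k}$.

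To recover the explicit representation (\ref{EQUATION_SOLUTION_DELAY_ODE}), I would verify directly that the stated function solves (\ref{EQUATION_ORDINARY_DELAY_DE}). The engine of the computation is the differentiation rule $\partial_{t} \exp_{\tau}(\mathcal{B}, t) = \mathcal{B} \exp_{\tau}(\mathcal{B}, t - \tau)$, which holds for $t > -\tau$, $t \notin \tau \mathbb{Z}$, and follows from the piecewise-polynomial definition by shifting the summation index, together with the special values $\exp_{\tau}(\mathcal{B}, -\tau) = \mathrm{id}_{X}$ and $\exp_{\tau}(\mathcal{B}, s) = 0$ for $s < -\tau$. Differentiating the three summands of (\ref{EQUATION_SOLUTION_DELAY_ODE}) and applying the Leibniz rule to the two integrals, the boundary term of the Duhamel integral evaluates $\exp_{\tau}$ at $-\tau$ and so contributes precisely $f(t)$, while the surviving pieces, after a shift by one $\tau$, reassemble into the delayed term, the sign being governed by the appearance of $-\mathcal{B}$ in the delayed exponential. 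The matching at $t = 0$ and on $(-\tau, 0)$ is read off from the same two special values.

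The regularity upgrade is obtained by rerunning the method of steps under the stronger hypotheses. If $u^{0}_{\tau} \in C^{0}([-\tau, 0], X)$ with $u^{0}_{\tau}(0) = u^{0}$, and with $f$ continuous (as is in force in the application to Theorem \ref{THEOREM_EXISTENCE_OF_CLASSICAL_SOLUTIONS}), then on $I_{1}$ the integrand $\mathcal{B} u^{0}_{\tau}(\cdot - \tau) + f$ is continuous, so $u|_{I_{1}} \in C^{1}(I_{1}; X)$, and the compatibility condition removes the jump at $t = 0$. Inductively, continuity of $u$ on $I_{k-1}$ makes the right-hand side on $I_{k}$ continuous, whence $u|_{I_{k}} \in C^{1}(I_{k}; X)$, and both the values and, through the equation, the one-sided derivatives agree at each node $k\tau$; the pieces then glue to $u \in C^{0}([-\tau, \infty), X) \cap C^{1}([0, \infty), X)$.

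The step I expect to be the main obstacle is the honest differentiation of the history term $-\mathcal{B} \int_{-\tau}^{0} \exp_{\tau}(-\mathcal{B}, t - 2\tau - s) u^{0}_{\tau}(s) \mathrm{d}s$. The delayed exponential is continuous at every node except $t = -\tau$, where it jumps from $0$ to $\mathrm{id}_{X}$; as $t$ varies, the argument $t - 2\tau - s$ sweeps this jump at $s = t - \tau$, so the derivative of the integral is \emph{not} obtained by differentiating under the integral sign alone but picks up a contribution $u^{0}_{\tau}(t - \tau)$ from the discontinuity. Accounting for this jump correctly, rather than naively invoking the smooth differentiation rule, is what makes the delayed term appear with the right value on the first window $(0, \tau)$ and, by the same mechanism, on all later windows. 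Once this is handled, the remainder is purely algebraic bookkeeping in the shifted arguments $t - \tau$, $t - 2\tau - s$ and $t - \tau - s$.
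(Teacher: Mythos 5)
The paper itself offers no proof of Theorem \ref{THEOREM_DELAY_ODE}: it is imported wholesale from \cite[Theorem 3.12]{KhuPoRa2013}, so there is no in-paper argument to compare yours against. Taken on its own terms, your proposal is a correct and essentially complete substitute: the method of steps gives existence, uniqueness and the $H^{1}_{\mathrm{loc}}$ regularity window by window, and the closed form (\ref{EQUATION_SOLUTION_DELAY_ODE}) is then confirmed by direct differentiation using $\partial_{t}\exp_{\tau}(\mathcal{B}, t) = \mathcal{B}\exp_{\tau}(\mathcal{B}, t - \tau)$ together with $\exp_{\tau}(\mathcal{B}, -\tau) = \mathrm{id}_{X}$ and $\exp_{\tau}(\mathcal{B}, s) = 0$ for $s < -\tau$. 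You also correctly isolate the one genuinely delicate point: the history integral cannot be differentiated under the integral sign, because the kernel $s \mapsto \exp_{\tau}(-\mathcal{B}, t - 2\tau - s)$ jumps from $\mathrm{id}_{X}$ to $0$ at $s = t - \tau$, and it is exactly this moving discontinuity that produces the delayed term on the first window $(0, \tau)$. Your observation that the classical-solution upgrade additionally needs $f$ continuous (which the statement leaves implicit but which holds in the application to Theorem \ref{THEOREM_EXISTENCE_OF_CLASSICAL_SOLUTIONS}) is a genuine improvement on the statement as printed.

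One point you should make explicit rather than bury in the phrase ``the sign being governed by the appearance of $-\mathcal{B}$'': carrying out your verification literally shows that (\ref{EQUATION_SOLUTION_DELAY_ODE}) solves $\partial_{t} u(t) = -\mathcal{B} u(t - \tau) + f(t)$, i.e., $\partial_{t} u + \mathcal{B} u(t - \tau) = f$, and \emph{not} the equation $\partial_{t} u(t) = \mathcal{B} u(t - \tau) + f(t)$ as displayed in (\ref{EQUATION_ORDINARY_DELAY_DE}). This is a sign inconsistency in the paper's own statement (the formula is written for the convention actually used in Equations (\ref{EQUATION_FIRST_ORDER_SYSTEM_ABSTRACT_FORM_B_INFTY_1})--(\ref{EQUATION_FIRST_ORDER_SYSTEM_ABSTRACT_FORM_B_INFTY_2})), not an error in your method; but a verification-based proof lives or dies by such signs, so you must fix one convention, state it, and check against it. Similarly, the $\varphi(t)$ in the first line of (\ref{EQUATION_SOLUTION_DELAY_ODE}) should read $u^{0}_{\tau}(t)$. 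With those notational repairs your argument goes through as written.
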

\end{appendix}

\section*{Conflict of Interests}
The authors declare that there is no conflict of interests regarding the publication of this paper.

\section*{Acknowledgment}
The investigations were supported by the Young Scholar Fund
(Research Grant ZUK 52/2 of the Deutsche Forschungsgemeinschaft) at the University of Konstanz, Konstanz, Germany

\addcontentsline{toc}{section}{References}

\end{document}